\documentclass[12pt]{article}
\usepackage[english]{babel}
\usepackage{amsmath,amsthm}
\usepackage{amsfonts}
\usepackage{enumerate}
\usepackage{hyperref}
\hypersetup{colorlinks,
            linkcolor=blue, anchorcolor=blue,
            citecolor=blue, urlcolor=blue}
\newtheorem{thm}[equation]{Theorem}

\newtheorem{lem}[equation]{Lemma}

\newtheorem{claim}[equation]{Claim}

\theoremstyle{definition}
\newtheorem{defn}[equation]{Definition}
\newtheorem{example}[equation]{Example}
\newtheorem{observation}[equation]{Observation}
\newtheorem*{notation}{Notation}

\numberwithin{equation}{section}
\begin{document}
\title{\bf\Large A note on Fujimoto's uniqueness theorem with $(2n+3)$ hyperplanes}%
\author{Kai Zhou \footnote{{\it E-mail address}: \texttt{zhoukai@tongji.edu.cn}}}%
\date{}%
\maketitle
\def\thefootnote{}
\begin{abstract}
  Hirotaka Fujimoto proved in [Nagoya Math. J., 1976(64): 117--147] and [Nagoya Math. J., 1978(71): 13--24] a uniqueness theorem for algebraically non-degenerate meromorphic maps into $\mathbb{P}^n(\mathbb{C})$ sharing $(2n+3)$ hyperplanes in general position. The proof of Lemma 3.6 in [Nagoya Math. J., 1976(64): 117--147] is found to contain a mistake. This mistake can be corrected easily from a new point of view. This note explains the idea of the correction and provides a complete proof. \footnote{2010 {\it Mathematics Subject Classification.} 32H30, 20K15.}
\end{abstract}
\section{Introduction}          \label{sec:Introduction}

In the 1970s, Hirotaka Fujimoto \cite{Fujimoto75,Fujimoto76,Fujimoto78} proved several uniqueness theorems for meromorphic maps into a complex projective space, which are generalizations of the uniqueness theorems of meromorphic functions obtained in \cite{Polya1921} and \cite{Nevanlinna1926}. For instance, Fujimoto proved in \cite{Fujimoto76} and \cite{Fujimoto78} the following uniqueness theorem.
\begin{thm}        \label{thm:the_uniqueness_thm_with_(2n+3)_hyperplanes}
Let $ f $ and $ g $ be two meromorphic maps of $\mathbb{C}^m $ into the $ n$-dimensional complex projective space $\mathbb{P}^n(\mathbb{C})$ such that $ f(\mathbb{C}^m)\not\subseteq H_j $, $ g(\mathbb{C}^m)\not\subseteq H_j $ and $ f^*(H_j)=g^*(H_j)\, (1\leq j\leq 2n+3)$ for $(2n+3)$ hyperplanes $ H_j $ in $\mathbb{P}^n(\mathbb{C})$ in general position. If $ g $ is algebraically non-degenerate, then $ f=g.$
\end{thm}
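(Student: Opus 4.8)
The plan is to follow the classical architecture of Fujimoto's proof, isolating its function-theoretic core — the analogue of the faulty Lemma~3.6 — and handling that core afresh.

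\emph{Normalization.} Fix reduced representations $\tilde f=(f_0,\dots,f_n)$, $\tilde g=(g_0,\dots,g_n)$ of $f,g$ by entire functions with no common zeros, and let $L_j$ be a linear form defining $H_j$. Since the $H_j$ are in general position and $2n+3\ge n+2$, after a projective linear change of homogeneous coordinates we may assume $H_j=\{z_j=0\}$ for $0\le j\le n$, and then each remaining form $L_\ell=\sum_i a_{\ell i}z_i$ ($n+1\le \ell\le 2n+2$) has all coefficients $a_{\ell i}\ne 0$, for otherwise $n+1$ of our hyperplanes would pass through a common point. The hypothesis $f^*(H_j)=g^*(H_j)$ means $L_j(\tilde f)$ and $L_j(\tilde g)$ have the same zero divisor, so $u_j:=L_j(\tilde f)/L_j(\tilde g)$ is entire and nowhere zero with entire reciprocal; write $u_j=e^{h_j}$ with $h_j$ entire. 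For $0\le j\le n$ this says $f_j=u_jg_j$.

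\emph{Reduction to constancy.} I claim $f=g$ follows once we know that each $u_j/u_k$ is constant. If so, $u_j=c_ju_0$ for constants $c_j$ with $c_0=1$, hence $f_j=c_ju_0g_j$ for $0\le j\le n$, i.e.\ $f=[\,c_0g_0:\cdots:c_ng_n\,]$. Substituting into $L_\ell(\tilde f)=u_\ell L_\ell(\tilde g)$ for $n+1\le \ell\le 2n+2$ and cancelling the nowhere-zero factor $u_0$ gives $\sum_{i=0}^n a_{\ell i}(c_i-c_\ell)g_i\equiv 0$; algebraic non-degeneracy of $g$ forces $g_0,\dots,g_n$ to be linearly independent over $\mathbb C$, and as $a_{\ell i}\ne 0$ this yields $c_i=c_\ell$ for every $i$, so all $c_j=1$ and $f=g$.

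\emph{The core step.} It remains to show that all $h_j-h_k$ are constant. Here one uses the $n+2$ linear relations among the $2n+3$ forms: for any $(n+2)$-subset $\{j_0,\dots,j_{n+1}\}$ there are nonzero constants $\lambda_0,\dots,\lambda_{n+1}$ with $\sum_k\lambda_kL_{j_k}\equiv 0$, whence $\sum_k\lambda_kL_{j_k}(\tilde g)\equiv 0$ and also $\sum_k\lambda_k u_{j_k}L_{j_k}(\tilde g)\equiv 0$; eliminating the $k=0$ term between these two identities gives
\[
\sum_{k=1}^{n+1}\lambda_k\bigl(e^{h_{j_k}}-e^{h_{j_0}}\bigr)\,L_{j_k}(\tilde g)\equiv 0,
\]
a vanishing linear combination of $n+1$ functions $L_{j_k}(\tilde g)$ which are themselves linearly independent over $\mathbb C$ (non-degeneracy plus general position). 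Letting the $(n+2)$-subset vary produces a rigid family of such exponential identities, and the goal is to extract from it — via a suitable generalization of Borel's lemma — that every exponent $h_{j_k}-h_{j_0}$ is constant. This is exactly the content that the (corrected) Lemma~3.6 must supply, and I expect it to be the main obstacle. The difficulty is that the naive appeal to Borel's lemma mishandles the possibilities in which a proper subsum of the $\lambda_k(e^{h_{j_k}}-e^{h_{j_0}})L_{j_k}(\tilde g)$ vanishes; the corrected argument should instead show that any non-constant exponent $h_{j_k}-h_{j_0}$ forces $g$ — or an auxiliary map assembled from the $g_i$ together with finitely many of the units $e^{h_i}$ — to lie in a proper linear subspace of a projective space, contradicting the algebraic non-degeneracy of $g$. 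Reorganizing the core as such a degeneracy statement, rather than a case-by-case exponential computation, is the ``new point of view''; granting it, the Reduction step completes the proof.
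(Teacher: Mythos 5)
Your normalization and your reduction of $f=g$ to the constancy of the ratios $u_j/u_k$ are correct, but they are the routine part of Fujimoto's argument; everything that makes the theorem hard is concentrated in what you call the core step, and there you prove nothing — you only record the expectation that ``a suitable generalization of Borel's lemma'' will yield that every $h_{j_k}-h_{j_0}$ is constant, and you explicitly defer this as ``the main obstacle.'' So the proposal is a plan with its essential step missing, not a proof. Moreover, the expectation itself mischaracterizes how the gap is actually filled. In Fujimoto's proof the function theory and the combinatorics are separated: a Borel-type lemma (coming from the second main theorem, applied to the identities $\sum_k\lambda_k u_{j_k}L_{j_k}(\tilde g)\equiv 0$ you wrote down) shows only that the classes of the units $u_j$, normalized so that one entry is the identity, form a $(2n+3)$-tuple with the property $(P_{2n+3,n+1})$ in the torsion-free abelian group of nowhere-zero entire functions modulo nonzero constants. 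The corrected Lemma 3.6 (Lemma 1.3 of this note) is then a purely group-theoretic statement about such tuples, and it does \emph{not} conclude that the tuple is trivial: the extremal configurations of type (A) and type (B) genuinely occur at the group level, and a further, separate argument — exploiting the explicit multiplicative shape of (A) and (B) together with the \emph{algebraic} (not merely linear) non-degeneracy of $g$ — is required to exclude them and force all $u_j$ to be proportional. Your sketch supplies none of these three ingredients (the $(P_{2n+3,n+1})$ property, the combinatorial lemma, the elimination of the extremal types), and a direct ``constancy of exponents'' statement of the kind you hope for is false without the algebraic non-degeneracy input, which in the actual proof is used exactly at the last of these stages.

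Two further corrections of framing. First, the ``new point of view'' of this note is not a reorganization of the analytic core as a degeneracy statement about an auxiliary map; it is an algebraic repair inside the proof of the combinatorial lemma, namely replacing Fujimoto's nonexistent ``adequate basis'' of $\langle\alpha_1,\dots,\alpha_q\rangle$ by a basis of the $\mathbb{Q}$-vector space $\langle\alpha_1,\dots,\alpha_q\rangle\otimes_{\mathbb{Z}}\mathbb{Q}$ as in Observation 1.5. Second, the note itself does not reprove Theorem 1.1 at all: the theorem is quoted from Fujimoto, and the note's contribution is solely the corrected proof of Lemma 1.3 on which Fujimoto's proof of the theorem relies. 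A complete blind proof of the statement would therefore have to reproduce (or replace) Fujimoto's full chain of arguments, not just its first and last links.
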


One of the key ingredients of Fujimoto's proof of the above theorem is the Lemma 3.6 in \cite{Fujimoto76}. Unfortunately, a mistake is found in Fujimoto's proof of the Lemma 3.6 in \cite{Fujimoto76}. In this note, I shall show this mistake and explain the idea of correcting it. A complete proof of the Lemma 3.6 in \cite{Fujimoto76} will be given in Section \ref{sec:Proof_of_3rdCombiLem}.

To state Fujimoto's Lemma 3.6 in \cite{Fujimoto76}, we need some preparations.

Let $ G $ be a torsion-free abelian group and let $ A=(\alpha_1,\dots,\alpha_q)$ be a $ q$-tuple of elements in $ G.$ Denote by $\langle\alpha_1,\dots, \alpha_q \rangle $ the subgroup of $ G $ generated by $\alpha_1,\dots,\alpha_q.$ Since $ G $ is torsion-free, the subgroup $\langle\alpha_1,\dots, \alpha_q \rangle $ is free and is of finite rank. We denote by {\rm rank}$\{\alpha_1,\dots,\alpha_q\}$ the rank of $\langle\alpha_1,\dots, \alpha_q \rangle.$

The following notion is introduced by Fujimoto.
\begin{defn}      \label{defn:property_(Pr,s)}
Let $ q\geq r>s\geq 1 $ be positive integers. Let $(G,\cdot)$ be a torsion-free abelian group. A $ q$-tuple $ A=(\alpha_1,\dots,\alpha_q)$ of elements in $ G $ is said to \emph{have the property} $(P_{r,s})$ if arbitrarily chosen $ r $ elements $\alpha_{l(1)},\dots,\alpha_{l(r)}$ in $ A $ ($ 1\leq l(1)<\dots<l(r)\leq q $) satisfy the condition that, for any $ s $ distinct indices $ i_1,\dots, i_s \in\{1,\dots,r\},$ there exist distinct indices $ j_1,\dots, j_s \in\{1,\dots,r\}$ with $\{j_1,\dots,j_s\}\neq \{i_1,\dots,i_s\}$ such that
 \[
    \alpha_{l(i_1)}\cdot\alpha_{l(i_2)}\cdots \alpha_{l(i_s)}= \alpha_{l(j_1)}\cdot\alpha_{l(j_2)}\cdots \alpha_{l(j_s)}.
 \]
\end{defn}

We also need the following notation.
\begin{notation}
For elements $\alpha_1,\alpha_2,\dots,\alpha_q,\tilde{\alpha}_1,\tilde{\alpha}_2,\dots,\tilde{\alpha}_q $ in an abelian group $(G,\cdot),$ we write
 \[
    \alpha_1:\alpha_2:\dots:\alpha_q=\tilde{\alpha}_1:\tilde{\alpha}_2:\dots:\tilde{\alpha}_q,
 \]
if there is an element $\beta\in G $ such that $\alpha_i=\beta\tilde{\alpha}_i $ for any $ 1\leq i\leq q.$
\end{notation}

Now the Lemma 3.6 in \cite{Fujimoto76} is stated as follows.
\begin{lem}      \label{lem:the_third_combinatorial_lemma}
Let $ s $ and $ q $ be positive integers with $ 2\leq s<q\leq 2s.$ Let $(G,\cdot)$ be a torsion-free abelian group. Let $ A=(\alpha_1,\dots,\alpha_q)$ be a $ q$-tuple of elements in $ G $ that has the property $(P_{q,s}),$ and assume that at least one $\alpha_i $ equals the unit element $ 1 $ of $ G.$ Then
 \begin{enumerate}[\rm (i)]
   \item {\rm rank}$\{\alpha_1,\dots,\alpha_q\}=:t\leq s-1;$
   \item if $ t=s-1,$ then $ q=2s $ and there is a basis $\{\beta_1,\dots,\beta_{s-1}\}$ of $\langle\alpha_1,\dots,\alpha_q\rangle $ such that the $\alpha_i $ are represented, after a suitable change of indices, as one of the following two types:
     \begin{itemize}
       \item[\rm (A)] $ s $ is odd and
            \[
               \alpha_1:\alpha_2:\dots:\alpha_{2s}= 1:1:\beta_1:\beta_1:\beta_2:\beta_2:\dots:\beta_{s-1}:\beta_{s-1};
            \]
       \item[\rm (B)] $\alpha_1:\alpha_2:\dots:\alpha_{2s}= 1:1:\dots:1:\beta_1:\dots:\beta_{s-1}: (\beta_1\cdots\beta_{a_1})^{-1}:(\beta_{a_1+1}\cdots\beta_{a_2})^{-1}:\dots:(\beta_{a_{k-1}+1}\cdots\beta_{a_k})^{-1},$
           \par where $ 0\leq k\leq s-1 $, $ 1\leq a_1<a_2<\dots<a_k\leq s-1,$ and the unit element $ 1 $ appears $(s+1-k)$ times in the right hand side.
     \end{itemize}
 \end{enumerate}
\end{lem}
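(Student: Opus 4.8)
The plan is to move from the group $G$ to linear algebra over $\mathbb R$ and argue with a generic linear functional; this geometric reformulation is, I believe, the ``new point of view'' of the abstract, and repeated elements are exactly what must be handled with care. \emph{Setup.} Write $G$ additively and set $\Lambda=\langle\alpha_1,\dots,\alpha_q\rangle$, a free abelian group of rank $t$. Fix an embedding $\Lambda\hookrightarrow V:=\Lambda\otimes_{\mathbb Z}\mathbb R\cong\mathbb R^{t}$ and let $v_i\in V$ be the image of $\alpha_i$. Then the $v_i$ span $V$, at least one equals $0$, and $(P_{q,s})$ (here $r=q$) says: for every $s$-element set $S\subseteq\{1,\dots,q\}$ there is an $s$-element set $S'\ne S$ with $\sum_{i\in S}v_i=\sum_{i\in S'}v_i$; equivalently, for every such $S$ there are disjoint nonempty $B\subseteq S$ and $C\subseteq\{1,\dots,q\}\setminus S$ with $|B|=|C|$ and $\sum_{i\in B}v_i=\sum_{i\in C}v_i$. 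Group the $v_i$ into their distinct values $w_1,\dots,w_p$ with multiplicities $m_1,\dots,m_p$, so $\sum_j m_j=q$; since $0$ occurs among the $w_j$, the remaining $p-1$ of them span $V$, whence $p\ge t+1$.

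\emph{Part (i): $t\le s-1$.} Suppose instead $t\ge s$; I will exhibit an $s$-subset whose sum is attained by no other $s$-subset, contradicting the statement above. Call $J\subseteq\{1,\dots,p\}$ \emph{separable} if some $\phi\in V^{*}$ satisfies $\phi(w_j)>\phi(w_{j'})$ whenever $j\in J$, $j'\notin J$. If $J$ is separable and $\sum_{j\in J}m_j=s$, then the $s$-subset $S$ consisting of all $m_j$ copies of each $w_j$ with $j\in J$ has uniquely attained sum: any competing $s$-subset with the same sum must, by the strict $\phi$-maximality of $J$, use exactly $m_j$ copies of each $w_j$ ($j\in J$) and so coincide with $S$. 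Thus it suffices to find such a $J$. Now $p\ge t+1\ge s+1$ and $q=\sum_j m_j\le 2s$ force at most $s-1$ of the values to have multiplicity $\ge 2$, so the polytope $\operatorname{conv}\{w_1,\dots,w_p\}$, of dimension $t\ge s\ge 2$, has at least $(s+1)-(s-1)=2$ vertices of multiplicity $1$. Putting such a vertex on top and then rotating the separating direction, one checks that the prefix multiplicity-sums in the induced order can be made to hit $s$ exactly, which is the $J$ we want. (The subtlety, and a plausible site of the original gap, is that a repeated $\alpha_i$ forces its whole multiplicity-group to be included or excluded: choosing an arbitrary linearly independent $s$-tuple of the $v_i$ is not enough.)

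\emph{Part (ii): $q=2s$ and the normal forms.} Assume $t=s-1$, so $p\ge s$. If $q\le 2s-1$ the same separation argument applies — now with more room, using that the polytope still has a multiplicity-$1$ vertex (at least $t+1=s$ vertices but at most $s-1$ values of multiplicity $\ge 2$) — and again produces a uniquely attained $s$-subset sum; hence $q=2s$. For the classification, fix a generic $\phi\in V^{*}$ and put $x_i=\phi(v_i)$: since $\phi$ is injective on the finitely many nonzero differences of $s$-subset sums, $(x_i)$ still has $(P_{2s,s})$, and an $s$-subset that is a union of full multiplicity-groups indexed by $J$ has its sum repeated precisely when the relation lattice $L:=\ker\bigl(\mathbb Z^{p}\to\Lambda,\ e_j\mapsto w_j\bigr)$ contains a nonzero $d=(d_j)$ with $\sum_j d_j=0$, $|d_j|\le m_j$, $d_j\le 0$ for $j\in J$ and $d_j\ge 0$ otherwise. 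Here $\operatorname{rank}L=p-(s-1)$ and, as $0$ is a value, $L$ contains the standard basis vector $e_{j_0}$ at the zero index. One then runs a finite analysis of the pairs (multiplicity vector, lattice $L$) compatible with $\sum_j m_j=2s$ and these constraints: one shows every nonzero value has multiplicity $\le 2$, and splits into two shapes — either all $p=s$ values have multiplicity $2$, in which case a parity count over $s$-subsets (an $s$-subset that is not repeatable by a swap inside one group would have an all-even multiplicity vector summing to $s$) forces $s$ odd and a choice of basis of the nonzero values gives type (A); or some nonzero value has multiplicity $1$, in which case $L$ is generated by $e_{j_0}$ together with ``telescoping'' cancellation relations, and tallying the number of $1$'s and the way the inverse-type generators group, after a basis change and a relabelling, gives type (B).

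\emph{Main obstacle.} The genuinely hard part is this last step — proving that $(P_{2s,s})$ together with $\operatorname{rank}=s-1$ admits \emph{exactly} the normal forms (A) and (B), with the parity restriction in (A) — because one must keep the bookkeeping honest against the large symmetry present in the conclusion (relabelling of indices, change of basis $\{\beta_1,\dots,\beta_{s-1}\}$, and the overall factor $\beta$ concealed in the $:$-notation), and because (A) and (B) are not visibly exhaustive at the outset. Carrying the whole analysis in terms of which $s$-subset sums coincide — equivalently, the sign-restricted short vectors of $L$ — rather than manipulating products of group elements directly, is what keeps the case analysis finite and checkable; the proof that $q=2s$ and the exclusion of the intermediate configurations likewise rest on the sharp form of the separation argument of part (i).
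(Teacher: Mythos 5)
Your reformulation (pass to $V=\Lambda\otimes\mathbb{R}$, group equal $v_i$ into values with multiplicities, and observe that a \emph{separable} union of full multiplicity classes with total multiplicity exactly $s$ yields an $s$-subset whose sum is attained by no other $s$-subset, contradicting $(P_{q,s})$) is sound as a mechanism, and your counting $p\ge t+1$, at most $s-1$ values of multiplicity $\ge 2$, is correct. But the proposal has two genuine gaps. First, the step you dispose of with ``putting such a vertex on top and then rotating the separating direction, one checks that the prefix multiplicity-sums \dots can be made to hit $s$ exactly'' is precisely the crux, and it is not proved: you need a set $J$ of values that is simultaneously (a) strictly separable from its complement and (b) of total multiplicity exactly $s$, and when $p>t+1$ the values are no longer affinely independent, so not every subset is separable; a generic ordering can perfectly well have its prefix sums jump over $s$ at a multiplicity-$\ge 2$ value, and it is not shown that some rotation repairs this. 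The same unproved claim is then invoked a second time, with $t=s-1$ and $q\le 2s-1$, to force $q=2s$, where it is even less obvious (the near-(A)/(B) configurations are exactly the delicate ones). Whether the claim is even true in that range needs an argument using the spanning hypothesis; none is given.

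Second, and more seriously, the classification in (ii) --- that $t=s-1$, $q=2s$ and $(P_{2s,s})$ force exactly the normal forms (A) (with $s$ odd) and (B) --- is not carried out: you reduce it to ``a finite analysis of the pairs (multiplicity vector, lattice $L$)'' and yourself label it the main obstacle. That analysis is not finite uniformly in $s$ (the multiplicity vectors and lattices grow with $s$), so as written this is a plan, not a proof; it is the bulk of the lemma and of the paper's Section 2. For comparison, the paper avoids real separation arguments altogether: it inducts on $s$, uses a rational ``adequate basis'' of $\langle\alpha_1,\dots,\alpha_q\rangle\otimes_{\mathbb{Z}}\mathbb{Q}$ (Observation \ref{obsv:adequate_basis_exists_by_tensor_Q}), partitions the indices by the last nonvanishing exponent into $M_0,\dots,M_t$, proves that no subfamily of the $m_\tau$'s sums to $s$ (Claim \ref{claim:the_sum_of_any_mx_is_neq_s}) and pins down the multiplicity patterns (Claim \ref{claim:two_types_of_mx}) in the all-nonnegative case, and in the remaining case reduces to a shorter tuple with property $(P_{n_0,s-\tilde n})$ and applies the induction hypothesis (Claims \ref{claim:alphai_in_N0_satisfy_induction_hypothesis}--\ref{claim:alphaN0_is_not_of_type(A)}). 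If you want to salvage your route, you would need to prove the exact-prefix separation lemma and then supply an argument (most plausibly an induction on $s$, as in the paper) replacing the ``finite analysis''; as it stands the proposal does not establish either conclusion (i) in full or conclusion (ii).
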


Fujimoto's proof of the above lemma essentially uses the assertion that, for any $ q$-tuple $ A=(\alpha_1,\dots,\alpha_q)$ of elements in a torsion-free abelian group $(G,+),$ there exists a basis $\{\eta_1,\dots,\eta_t\}$ of $\langle\alpha_1,\dots,\alpha_q\rangle $ with the following property: there are distinct indices $ i_1,\dots, i_t\in\{1,\dots,q\}$ and nonzero integers $ l_1,\dots, l_t $ such that
\[
   \alpha_{i_{\tau}}=l_{\tau}\eta_{\tau},\quad 1\leq \tau\leq t.
\]
Such a basis $\{\eta_1,\dots,\eta_t\}$ is called by Fujimoto an adequate basis for $ A.$ Unfortunately, adequate basis does not exist in general. This is illustrated by the following example.

\begin{example}            \label{example:no_adequate_basis}
Consider the free abelian group $(\mathbb{Z}^3,+).$ Let
 \[
    \alpha_1=(1,0,0),\quad \alpha_2=(1,1,0),\quad \alpha_3=(1,2,2),\quad \alpha_4=(1,2,5)
 \]
be four elements in $\mathbb{Z}^3.$

Since
 \[
    (0,1,0)=\alpha_2-\alpha_1\quad\text{and}\quad (0,0,1)=-2\alpha_3+\alpha_4+2\alpha_2-\alpha_1,
 \]
we see that
 \[
    \langle\alpha_1,\alpha_2,\alpha_3,\alpha_4\rangle=\mathbb{Z}^3.
 \]
So, if there exists an adequate basis for $(\alpha_1,\alpha_2,\alpha_3,\alpha_4),$ then one sees easily that there are distinct indices $ i,j,k\in\{1,2,3,4\}$ such that $\{\alpha_i,\alpha_j,\alpha_k\}$ is a basis of $\mathbb{Z}^3.$ However, one verifies easily that $\{\alpha_i,\alpha_j,\alpha_k\}$ is not a basis of $\mathbb{Z}^3 $ for any three distinct indices $ i,j,k\in\{1,2,3,4\}.$ This shows that adequate basis does not exist for $(\alpha_1,\alpha_2,\alpha_3,\alpha_4).$
\end{example}

Fujimoto's proof of Lemma \ref{lem:the_third_combinatorial_lemma} can be corrected easily from a new point of view. Roughly speaking, we take an ``adequate basis'' in $\langle\alpha_1,\dots,\alpha_q\rangle\otimes_{\mathbb{Z}}\mathbb{Q}$ and regard $\langle\alpha_1,\dots,\alpha_q\rangle\otimes_{\mathbb{Z}}\mathbb{Q}$ as the ambient space. I shall explain more.

Let $(G,+)$ be a torsion-free abelian group and let $ A=(\alpha_1,\dots,\alpha_q)$ be a $ q$-tuple of elements in $ G.$ We regard the free abelian group $\langle\alpha_1,\dots,\alpha_q\rangle $ as a subgroup of $\langle\alpha_1,\dots,\alpha_q\rangle\otimes_{\mathbb{Z}}\mathbb{Q}$ which can be naturally regarded as a $\mathbb{Q}$-vector space. Because the $\mathbb{Q}$-vector space $\langle\alpha_1,\dots,\alpha_q\rangle\otimes_{\mathbb{Z}}\mathbb{Q}$ is generated by $\alpha_1,\dots,\alpha_q,$ there exist distinct indices $ i_1,\dots,i_t\in\{1,\dots,q\}$ with $ t=\mbox{\rm rank}\{\alpha_1,\dots,\alpha_q\}$ such that $\{\alpha_{i_1},\dots,\alpha_{i_t}\}$ is a basis of the $\mathbb{Q}$-vector space $\langle\alpha_1,\dots,\alpha_q\rangle \otimes_{\mathbb{Z}}\mathbb{Q}.$ Then we see that there exist positive integers $ l_1,\dots,l_t $ such that, for every $ j\in\{1,\dots,q\}\setminus\{i_1,\dots,i_t\},$
\[
   \alpha_j= l(j,1)\cdot\frac{1}{l_1}\alpha_{i_1}+\dots+l(j,t)\cdot\frac{1}{l_t}\alpha_{i_t}
\]
for suitable integers $ l(j,1),\dots,l(j,t).$

The above discussion shows the following.
\begin{observation}      \label{obsv:adequate_basis_exists_by_tensor_Q}
Let $(G,+)$ be a torsion-free abelian group and let $ A=(\alpha_1,\dots,\alpha_q)$ be a $ q$-tuple of elements in $ G.$ We regard the free abelian group $\langle\alpha_1,\dots,\alpha_q\rangle $ as a subgroup of $\langle\alpha_1,\dots,\alpha_q\rangle\otimes_{\mathbb{Z}}\mathbb{Q}.$ There exist distinct indices $ i_1,\dots,i_t\in\{1,\dots,q\}$ with $ t=\mbox{\rm rank}\{\alpha_1,\dots,\alpha_q\},$ positive integers $ l_1,\dots,l_t $ and a basis $\{\eta_1,\dots,\eta_t\}$ of the $\mathbb{Q}$-vector space $\langle\alpha_1,\dots,\alpha_q\rangle\otimes_{\mathbb{Z}}\mathbb{Q},$ such that
 \begin{align*}
    \alpha_{i_{\tau}}&=l_{\tau}\eta_{\tau}, \quad 1\leq \tau\leq t,
 \\ \alpha_j &=l(j,1)\eta_1+\dots+l(j,t)\eta_t, \quad j\in\{1,\dots,q\}\setminus\{i_1,\dots,i_t\},
 \end{align*}
where $ l(j,1),\dots,l(j,t)$ are integers.
\end{observation}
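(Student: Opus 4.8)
The plan is to prove Observation \ref{obsv:adequate_basis_exists_by_tensor_Q} in two stages: first extract a $\mathbb{Q}$-basis of $V:=\langle\alpha_1,\dots,\alpha_q\rangle\otimes_{\mathbb{Z}}\mathbb{Q}$ from among the $\alpha_i$ themselves, then rescale each chosen basis vector by a suitable positive integer to obtain a basis $\{\eta_1,\dots,\eta_t\}$ with respect to which every remaining $\alpha_j$ has integer coordinates. The first stage is just linear algebra: since $\alpha_1,\dots,\alpha_q$ span the finite-dimensional $\mathbb{Q}$-vector space $V$, a maximal $\mathbb{Q}$-linearly independent subset among them is a basis, and its size equals $\dim_{\mathbb{Q}} V$; by the standard identification $\dim_{\mathbb{Q}}(M\otimes_{\mathbb{Z}}\mathbb{Q})=\operatorname{rank} M$ for a finitely generated free abelian group $M$, this dimension is exactly $t=\operatorname{rank}\{\alpha_1,\dots,\alpha_q\}$. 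So I would fix distinct indices $i_1,\dots,i_t\in\{1,\dots,q\}$ with $\{\alpha_{i_1},\dots,\alpha_{i_t}\}$ a $\mathbb{Q}$-basis of $V$.

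For the second stage, every $\alpha_j$ with $j\notin\{i_1,\dots,i_t\}$ is a $\mathbb{Q}$-linear combination $\alpha_j=\sum_{\tau=1}^{t} c_{j,\tau}\,\alpha_{i_\tau}$ with uniquely determined $c_{j,\tau}\in\mathbb{Q}$. Writing each $c_{j,\tau}$ as a fraction, I would let $l_\tau$ be a common denominator of the (finitely many) coefficients $c_{j,\tau}$ as $j$ ranges over $\{1,\dots,q\}\setminus\{i_1,\dots,i_t\}$; concretely $l_\tau$ can be taken as the least common multiple of the denominators (in lowest terms) of those rationals, so $l_\tau$ is a positive integer and $l_\tau c_{j,\tau}\in\mathbb{Z}$ for all such $j$. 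Now set $\eta_\tau:=\frac{1}{l_\tau}\alpha_{i_\tau}$. Then $\{\eta_1,\dots,\eta_t\}$ is again a $\mathbb{Q}$-basis of $V$ (we only scaled each basis vector by a nonzero rational), we have $\alpha_{i_\tau}=l_\tau\eta_\tau$ by construction, and
\[
   \alpha_j=\sum_{\tau=1}^{t} c_{j,\tau}\,\alpha_{i_\tau}=\sum_{\tau=1}^{t}(l_\tau c_{j,\tau})\,\eta_\tau=\sum_{\tau=1}^{t} l(j,\tau)\,\eta_\tau
\]
with $l(j,\tau):=l_\tau c_{j,\tau}\in\mathbb{Z}$, which is exactly the asserted representation.

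The conceptual content of the statement is essentially contained in the discussion preceding it in the excerpt, so there is really no deep obstacle; the only point requiring a sentence of care is the first stage, namely justifying that the maximal independent subset has size precisely $t=\operatorname{rank}\{\alpha_1,\dots,\alpha_q\}$ rather than merely some number $\le q$. I would handle this by recalling that tensoring the inclusion $\langle\alpha_1,\dots,\alpha_q\rangle\hookrightarrow G$ (or just working with the free abelian group $\langle\alpha_1,\dots,\alpha_q\rangle$ directly) with $\mathbb{Q}$ is exact and sends a rank-$t$ free group to a $t$-dimensional $\mathbb{Q}$-vector space, and that $\mathbb{Z}$-linear independence of a subset of a free abelian group is equivalent to $\mathbb{Q}$-linear independence of its image after $\otimes_{\mathbb{Z}}\mathbb{Q}$. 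Everything else—uniqueness of coordinates, the common-denominator construction, and the verification that rescaling preserves the basis property—is routine and can be stated in a line or two each.
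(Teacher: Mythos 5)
Your proposal is correct and follows essentially the same route as the paper: the paper's justification (given in the discussion immediately preceding the Observation) likewise extracts a $\mathbb{Q}$-basis $\{\alpha_{i_1},\dots,\alpha_{i_t}\}$ from the spanning set, uses $\dim_{\mathbb{Q}}\big(\langle\alpha_1,\dots,\alpha_q\rangle\otimes_{\mathbb{Z}}\mathbb{Q}\big)=\mbox{\rm rank}\{\alpha_1,\dots,\alpha_q\}$, and clears denominators by positive integers $l_\tau$ so that $\eta_\tau=\frac{1}{l_\tau}\alpha_{i_\tau}$ gives integer coordinates for the remaining $\alpha_j$. Your explicit choice of $l_\tau$ as a least common multiple of the denominators and the remark on exactness of $-\otimes_{\mathbb{Z}}\mathbb{Q}$ merely spell out details the paper leaves implicit.
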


A complete proof of Lemma \ref{lem:the_third_combinatorial_lemma} will be given in Section \ref{sec:Proof_of_3rdCombiLem}. In the proof, we take a basis $\{\eta_1,\dots,\eta_t\}$ as in Observation \ref{obsv:adequate_basis_exists_by_tensor_Q}, and we regard the $\alpha_i $ as elements in $\langle\alpha_1,\dots,\alpha_q\rangle\otimes_{\mathbb{Z}}\mathbb{Q}$ whenever it is needed. The proof basically follows Fujimoto's original strategy. But simplifications are made in some steps of Fujimoto's strategy.

We shall give another two observations for later use.

\begin{observation}      \label{obsv:two_tuples_containing_1_proportion_generate_same group}
Let $(\alpha_1,\dots,\alpha_q)$ and $(\tilde{\alpha}_1,\dots,\tilde{\alpha}_q)$ be two $ q$-tuples of elements in a torsion-free abelian group $(G,\cdot).$ Assume that there exist indices $ i_0 $ and $ j_0 $ such that $\alpha_{i_0}=\tilde{\alpha}_{j_0}=1,$ and there is an element $\beta\in G $ such that $\alpha_i= \beta\tilde{\alpha}_i $ for any $ 1\leq i\leq q.$ Then
 \[
    \langle\alpha_1,\dots,\alpha_q\rangle= \langle\tilde{\alpha}_1,\dots,\tilde{\alpha}_q\rangle.
 \]
\end{observation}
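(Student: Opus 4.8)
The plan is to show the two-sided inclusion, and in fact it suffices to prove one inclusion by symmetry. Observe first that the relation $\alpha_i = \beta\tilde\alpha_i$ for all $i$, combined with $\alpha_{i_0} = 1$, forces $\beta = \alpha_{i_0}\tilde\alpha_{i_0}^{-1} = \tilde\alpha_{i_0}^{-1}$; in particular $\beta \in \langle\tilde\alpha_1,\dots,\tilde\alpha_q\rangle$. Similarly, using $\tilde\alpha_{j_0} = 1$ we get $\beta = \alpha_{j_0}$, so $\beta \in \langle\alpha_1,\dots,\alpha_q\rangle$ as well. This is the whole point of requiring a unit element in each tuple: it pins down the ``scaling factor'' $\beta$ to lie in each of the two subgroups.

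Once we know $\beta \in \langle\tilde\alpha_1,\dots,\tilde\alpha_q\rangle$, the inclusion $\langle\alpha_1,\dots,\alpha_q\rangle \subseteq \langle\tilde\alpha_1,\dots,\tilde\alpha_q\rangle$ is immediate: each generator $\alpha_i = \beta\tilde\alpha_i$ is a product of two elements of $\langle\tilde\alpha_1,\dots,\tilde\alpha_q\rangle$, hence lies in that subgroup. For the reverse inclusion, I would either run the identical argument with the roles of the two tuples interchanged (noting $\tilde\alpha_i = \beta^{-1}\alpha_i$ and $\beta^{-1} = \alpha_{j_0}^{-1} \in \langle\alpha_1,\dots,\alpha_q\rangle$), or simply observe that the hypotheses are symmetric under swapping the two tuples and replacing $\beta$ by $\beta^{-1}$. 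Either way both inclusions hold, giving the claimed equality.

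There is essentially no obstacle here; the only thing to be careful about is not to assume $\beta$ lies in the relevant subgroup without first invoking the unit-element hypothesis, since without it the statement is false (e.g.\ $\tilde\alpha_i = 1$ for all $i$ and $\beta$ arbitrary). I would present this as a short three- or four-line argument.
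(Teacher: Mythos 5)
Your argument is correct: pinning down $\beta=\alpha_{j_0}=\tilde{\alpha}_{i_0}^{-1}$ via the two unit-element hypotheses and then deducing both inclusions is exactly the intended (and essentially the only natural) argument, which the paper leaves unwritten since the statement is presented as an Observation without proof. No gaps; your remark that the unit-element hypotheses are genuinely needed is also accurate.
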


\begin{observation}    \label{obsv:type(B)_is_symmetry_for_nonunit_elements}
Let $ x_1,\dots,x_{t+k}$ be elements in a torsion-free abelian group $(G,\cdot),$ which can be represented as follows:
 \begin{align*}
    &(x_1,\dots,x_t,x_{t+1},\dots,x_{t+k})
 \\ &\qquad =\big(\beta_1,\dots,\beta_t, (\beta_1\cdots\beta_{a_1})^{-1}, (\beta_{a_1+1}\cdots\beta_{a_2})^{-1},\dots,(\beta_{a_{k-1}+1}\cdots\beta_{a_k})^{-1}\big),
 \end{align*}
where $ t\geq 1 $, $ 0\leq k\leq t $, $ 1\leq a_1<\dots<a_k\leq t,$ and $\beta_1,\dots,\beta_t $ are multiplicatively independent elements in $ G.$ For any $ t $ multiplicatively independent elements $ x_{i_1},\dots,x_{i_t}$ among $ x_i $'s, by writing $\{1,\dots,t+k\}\setminus\{i_1,\dots,i_t\}=\{j_1,\dots,j_k\},$ we can verify that
 \[
    (x_{i_1},\dots,x_{i_t},x_{j_1},\dots,x_{j_k})= \big(x_{i_1},\dots,x_{i_t}, \big(\prod_{i\in\Lambda_1}x_i\big)^{-1},\dots,\big(\prod_{i\in\Lambda_k}x_i\big)^{-1}\big),
 \]
where $\Lambda_1,\dots,\Lambda_k $ are nonempty subsets of $\{i_1,\dots,i_t\}$ with $\Lambda_i\cap\Lambda_j=\emptyset $ for any $ 1\leq i<j\leq k.$
\end{observation}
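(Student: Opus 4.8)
The plan is to prove Observation~\ref{obsv:type(B)_is_symmetry_for_nonunit_elements} by a direct linear-algebra computation carried out in the $\mathbb{Q}$-vector space $V:=\langle x_1,\dots,x_{t+k}\rangle\otimes_{\mathbb{Z}}\mathbb{Q}$, switching from multiplicative to additive notation. Writing $\beta_1,\dots,\beta_t$ for the given basis of $V$ and $x_{t+\ell}=-(\beta_{a_{\ell-1}+1}+\dots+\beta_{a_\ell})$ for $1\le\ell\le k$ (with the convention $a_0=0$), the defining relations among the $x_i$ are exactly
\[
   x_{t+1}+\dots+x_{t+k}+\sum_{i=1}^{t}\epsilon_i\,\beta_i=0,
\]
where $\epsilon_i=1$ if $i\le a_k$ and $\epsilon_i=0$ otherwise; more usefully, each $x_{t+\ell}$ is $-1$ times a sum of a \emph{consecutive block} of the $\beta_i$'s, and these blocks are pairwise disjoint. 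The key structural fact is that the $(t+k)\times t$ integer matrix $M$ whose rows express the $x_i$ in the basis $\beta_1,\dots,\beta_t$ has the following shape: the first $t$ rows form the identity $I_t$, and each of the last $k$ rows is the negative of the indicator vector of a block $B_\ell:=\{a_{\ell-1}+1,\dots,a_\ell\}$, with $B_1,\dots,B_k$ disjoint. Every entry of $M$ lies in $\{-1,0,1\}$.

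Next I would analyze which $t$-subsets $\{i_1,\dots,i_t\}$ give multiplicatively independent elements, i.e.\ which $t\times t$ submatrices of $M$ are invertible over $\mathbb{Q}$. Fix such a subset and let $\{j_1,\dots,j_k\}$ be its complement. Each complementary index $j_r$ is either some $\beta$-index $p\le t$ (meaning the row $e_p$ of $I_t$ was dropped) or some $t+\ell$ (meaning a block row was dropped). The claim to verify is that $x_{j_r}$ can be written as $-\sum_{i\in\Lambda_r}x_i$ for a nonempty $\Lambda_r\subseteq\{i_1,\dots,i_t\}$, with the $\Lambda_r$ pairwise disjoint. I would do this by expressing the dropped rows in terms of the kept rows: since the kept rows form a $\mathbb{Q}$-basis, each dropped row is a unique $\mathbb{Q}$-linear combination of them, and the content of the observation is that this combination has all coefficients in $\{0,-1\}$ (so that $-x_{j_r}$ is a \emph{sum} of kept $x_i$'s), is nonempty, and the supports are disjoint across $r$. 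The cleanest route is to exhibit $\Lambda_r$ explicitly: run through the $k$ disjoint blocks $B_1,\dots,B_k$; within each block some $\beta$-indices are kept and some dropped, and one kept $x_i$ of ``block type'' (if present) plus the kept $\beta$-type $x_i=\beta_i$ in that block, together with the global relation $x_{t+1}+\dots+x_{t+k}=-\sum_{i\le a_k}\beta_i$, pin down each dropped element as minus a sum of kept ones. Because the blocks are disjoint and the identity part of $M$ is ``diagonal,'' the bookkeeping never couples two different blocks, which forces the disjointness of the $\Lambda_r$.

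Concretely I would organize the verification as follows. First, handle a single block $B$ in isolation: the rows involved are $\{e_i : i\in B\}$ and one row $-\sum_{i\in B}e_i$ (the block row), a total of $|B|+1$ rows spanning the $|B|$-dimensional coordinate subspace $\mathbb{Q}^B$; any $|B|$ of these $|B|+1$ rows are independent, and for each choice the one dropped row equals $-1$ times the sum of the kept rows restricted to that block (this is the elementary identity ``the negative sum of a basis plus that basis sums to zero'' in $\mathbb{Q}^B$). Second, observe that the ambient space decomposes as $V=\big(\bigoplus_{\ell=1}^k\mathbb{Q}^{B_\ell}\big)\oplus\mathbb{Q}^{C}$, where $C=\{1,\dots,t\}\setminus\bigcup_\ell B_\ell$ is the set of $\beta$-indices in no block; on the $\mathbb{Q}^C$ part the only available rows are $\{e_i:i\in C\}$, all of which must be kept, contributing nothing to the complement. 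Third, splice: a global $t$-subset is independent iff its restriction to each block part is a valid ``drop one row'' choice as above and it contains all of $C$; the complement $\{j_1,\dots,j_k\}$ then distributes one index into each block $B_\ell$, and $\Lambda_r$ is precisely the set of kept indices of the block containing $j_r$ — nonempty because $|B_\ell|\ge1$ forces at least one kept index, and pairwise disjoint because distinct blocks are disjoint. Taking negatives (back in multiplicative notation, inverses) of these block identities yields exactly the asserted representation $(x_{i_1},\dots,x_{i_t},(\prod_{i\in\Lambda_1}x_i)^{-1},\dots,(\prod_{i\in\Lambda_k}x_i)^{-1})$.

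I expect the main obstacle to be purely notational rather than mathematical: setting up clean indexing for ``which block an index belongs to'' and ``which row was dropped,'' and making the case split (a dropped index is of $\beta$-type versus block-type) transparent without drowning in subscripts. The underlying linear algebra — that a minimal dependent set of $\{0,\pm1\}$-vectors with the stated block structure forces each dropped vector to be a $\{0,-1\}$-combination of the kept ones — is elementary once the block decomposition of $V$ is made explicit; so the write-up should foreground that decomposition and keep the per-block argument (which is a one-line identity in $\mathbb{Q}^B$) as the only real computation.
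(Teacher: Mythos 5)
Your argument is correct. Note that the paper itself offers no proof of this observation---it is stated with ``we can verify that'' and left to the reader---so there is nothing to compare against line by line; your write-up simply supplies the missing verification, and it does so in what is essentially the natural way. The two points that carry the whole proof are exactly the ones you isolate: (1) since every $x_i$ lies in exactly one summand of the decomposition $\langle x_1,\dots,x_{t+k}\rangle\otimes_{\mathbb{Z}}\mathbb{Q}=\bigl(\bigoplus_{\ell=1}^{k}\mathbb{Q}^{B_\ell}\bigr)\oplus\mathbb{Q}^{C}$, a multiplicatively independent $t$-subset must contain every $x_i$ with $i\in C$ and exactly $|B_\ell|$ of the $|B_\ell|+1$ vectors attached to each block $B_\ell$, so the complement picks out one dropped vector per block; (2) within a block the $|B_\ell|+1$ vectors sum to zero (multiplicatively, their product is $1$), so the dropped vector is the inverse of the product of the kept ones, giving $\Lambda_r$ as the kept indices of that block---nonempty because $|B_\ell|\geq 1$, and pairwise disjoint because the blocks are. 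One small remark: the identities you derive hold verbatim in $G$ (they are exact relations among the $\beta_i$), so the passage to the $\mathbb{Q}$-vector space is a convenience for the independence bookkeeping rather than a necessity, and torsion-freeness guarantees nothing is lost either way.
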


We also recall the following result of Fujimoto.
\begin{lem}[see Lemma 2.6 in \cite{Fujimoto75}]        \label{lem:1st_CombiLem_simple}
Let $ G $ be a torsion-free abelian group. Let $ A=(\alpha_1,\dots,\alpha_q)$ be a $ q$-tuple of elements in $ G $ that has the property $(P_{r,s}),$ where $ q\geq r>s\geq 1.$ Then there exist two distinct indices $ i,j\in\{1,\dots,q\}$ such that $\alpha_i=\alpha_j.$
\end{lem}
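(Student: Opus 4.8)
The plan is to argue by contradiction: suppose all the $\alpha_i$ are pairwise distinct, and derive a contradiction from the property $(P_{r,s})$. Since only $r$ of the $q$ elements matter at any one time, after passing to a sub-$r$-tuple we may as well assume $q=r$, so $A=(\alpha_1,\dots,\alpha_q)$ has the property that for every $s$-subset $\{i_1,\dots,i_s\}$ of indices there is a \emph{different} $s$-subset $\{j_1,\dots,j_s\}$ with $\alpha_{i_1}\cdots\alpha_{i_s}=\alpha_{j_1}\cdots\alpha_{j_s}$. I would like to compare the $\binom{q}{s}$ products $\prod_{i\in I}\alpha_i$, indexed by $s$-subsets $I$, and show that they cannot all be ``duplicated'' unless two of the $\alpha_i$ coincide.

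The key step is to introduce an ordering. Since $G$ is torsion-free abelian, $\langle\alpha_1,\dots,\alpha_q\rangle$ is a free abelian group of finite rank, hence admits a total order compatible with the group operation (a bi-invariant linear order on $\mathbb{Z}^t$, e.g.\ lexicographic); write it multiplicatively, so $\beta<\gamma$ is meaningful and is preserved under multiplication. Re-index so that $\alpha_1<\alpha_2<\dots<\alpha_q$ (strict, by the pairwise-distinct assumption). Now consider the \emph{smallest} product among all $s$-fold products of distinct $\alpha_i$'s: by compatibility of the order with multiplication, the unique smallest such product is $\alpha_1\alpha_2\cdots\alpha_s$, attained only for the index set $\{1,\dots,s\}$. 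Indeed, if $I=\{i_1<\dots<i_s\}\neq\{1,\dots,s\}$, then $i_k\geq k$ for all $k$ with strict inequality for some $k$, so $\alpha_{i_1}\cdots\alpha_{i_s}>\alpha_1\cdots\alpha_s$. Hence the $s$-subset $\{1,\dots,s\}$ cannot have its product equal to that of any \emph{other} $s$-subset, contradicting $(P_{r,s})$ applied to the full $r$-tuple with $i_1,\dots,i_s=1,\dots,s$.

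I expect the main (really the only) obstacle to be the justification that a finitely generated torsion-free abelian group admits a bi-invariant total order and that such an order has a unique minimal $s$-fold product; both are standard — the order is obtained by fixing an isomorphism with $\mathbb{Z}^t$ and taking the lexicographic order, which is visibly translation-invariant, and the minimality argument is the elementary monotonicity computation sketched above. One small point to handle carefully: the definition of $(P_{r,s})$ quantifies over every choice of $r$ elements from the $q$-tuple, so reducing to $q=r$ is legitimate — we simply apply the hypothesis to one fixed $r$-subset and never use the others. With the order in hand the contradiction is immediate, so no further case analysis is needed.
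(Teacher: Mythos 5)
Your proof is correct. Note, however, that this paper does not prove the lemma at all: it is quoted as Lemma 2.6 of Fujimoto's 1975 paper, so there is no in-paper argument to compare against. Your route — instantiate $(P_{r,s})$ at a single $r$-subtuple, equip the finitely generated (hence free) torsion-free abelian group $\langle\alpha_1,\dots,\alpha_q\rangle$ with a translation-invariant total order, and observe that if the chosen elements were pairwise distinct then the product over the $s$ smallest of them would be the \emph{unique} minimum among all $s$-fold products, contradicting the required duplicate — is a valid, self-contained proof; the only points needing care (existence of a compatible order via an isomorphism with $\mathbb{Z}^t$, strict monotonicity of products, and the legitimacy of fixing one $r$-subset) are all handled correctly in your sketch.
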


The following notation will be used in Section \ref{sec:Proof_of_3rdCombiLem}.
\begin{notation}
For a set $ S,$ we denote by $\#S $ the cardinality of $ S.$
\end{notation}

\section{Proof of Lemma \ref{lem:the_third_combinatorial_lemma}}       \label{sec:Proof_of_3rdCombiLem}

Lemma \ref{lem:the_third_combinatorial_lemma} will be proved by the induction on $ s.$

Consider first the case of $ s=2.$ In this case, $ q=3 $ or $ q=4.$

When $ q=3,$ we claim that $\alpha_1=\alpha_2=\alpha_3.$ Assume this is not true. Without loss of generality, we may assume that $\alpha_1\neq\alpha_2.$ Because $\alpha_1\alpha_3\neq\alpha_2\alpha_3,$ by assumption, we see $\alpha_1\alpha_3=\alpha_1\alpha_2,$ which implies $\alpha_3=\alpha_2.$
Similarly, we get $\alpha_3=\alpha_1.$ Then $\alpha_1=\alpha_2,$ which gives a contradiction. Thus $\alpha_1=\alpha_2=\alpha_3=1 $ and
{\rm rank}$\{\alpha_1,\alpha_2,\alpha_3\}=0.$ This shows the conclusions of Lemma \ref{lem:the_third_combinatorial_lemma} hold in this case.

Now assume $ q=4.$ It follows from Lemma \ref{lem:1st_CombiLem_simple} that there are distinct indices $ i_0,j_0\in\{1,\dots,4\}$ such that $\alpha_{i_0}=\alpha_{j_0}.$
Then by Observation \ref{obsv:two_tuples_containing_1_proportion_generate_same group} and considering the new 4-tuple $(\alpha_{i_0}^{-1}\alpha_1,\dots,\alpha_{i_0}^{-1}\alpha_4),$ we may assume, after a suitable change of indices, that $\alpha_1=\alpha_2=1.$
By assumption, there are distinct indices $ i,j\in\{1,\dots,4\}$ with $\{i,j\}\neq\{1,2\}$ such that
$\alpha_1\alpha_2= \alpha_i\alpha_j.$ If $\{i,j\}\neq\{3,4\},$ then we see easily that $\alpha_3=1 $ or $\alpha_4=1.$ If $\{i,j\}=\{3,4\},$ then $\alpha_3=\alpha_4^{-1}.$ So at least one of the following three cases occurs:
\begin{itemize}
  \item $(\alpha_1,\alpha_2,\alpha_3,\alpha_4)=(1,1,1,\beta);$
  \item $(\alpha_1,\alpha_2,\alpha_3,\alpha_4)=(1,1,\beta,1);$
  \item $(\alpha_1,\alpha_2,\alpha_3,\alpha_4)=(1,1,\beta,\beta^{-1}).$
\end{itemize}
It follows that {\rm rank}$\{\alpha_1,\dots,\alpha_4\}\leq 1,$ and if {\rm rank}$\{\alpha_1,\dots,\alpha_4\}=1,$ then $(\alpha_1,\dots,\alpha_4)$ is of the type (B).

Now Lemma \ref{lem:the_third_combinatorial_lemma} is proved in the case of $ s=2.$

Next we assume that $ s\geq 3 $ and assume the conclusions hold when $ s $ in Lemma \ref{lem:the_third_combinatorial_lemma} is replaced by a number $\leq s-1.$

If {\rm rank}$\{\alpha_1,\dots,\alpha_q\}=:t<s-1,$ then the conclusions of Lemma \ref{lem:the_third_combinatorial_lemma} hold trivially. Therefore, we assume in the following that $ t\geq s-1.$

Set
\[
   M_0:=\big\{i\in\{1,\dots,q\}\,|\,\alpha_i=1\big\} \quad\mbox{and}\quad m_0:=\# M_0.
\]
By Lemma \ref{lem:1st_CombiLem_simple} and Observation \ref{obsv:two_tuples_containing_1_proportion_generate_same group}, we may assume that $ m_0\geq 2.$

We may regard $\langle\alpha_1,\dots,\alpha_q \rangle $ as a subgroup of $\langle\alpha_1,\dots,\alpha_q \rangle\otimes_{\mathbb{Z}}\mathbb{Q}.$ By Observation \ref{obsv:adequate_basis_exists_by_tensor_Q}, there exist distinct indices $ u_1,\dots,u_t\in\{1,\dots,q\},$ positive integers $ l_1,\dots,l_t,$ and a basis $\{\eta_1,\dots,\eta_t\}$ of the $\mathbb{Q}$-vector space $\langle\alpha_1,\dots,\alpha_q \rangle\otimes_{\mathbb{Z}}\mathbb{Q},$ such that, the $\alpha_i,$ as elements in $\langle\alpha_1,\dots,\alpha_q \rangle \otimes_{\mathbb{Z}}\mathbb{Q},$ are represented as follows:
\begin{equation}      \label{equ:represent_alphai_by_an_adequate_basis_after_tensor_Q}
  \alpha_i=\eta_1^{l(i,1)}\cdot\cdots\cdot\eta_t^{l(i,t)}, \quad 1\leq i\leq q,
\end{equation}
where $ l(i,1),\dots,l(i,t)$ are integers and, for each $ 1\leq \tau\leq t,$
\[
   l(u_{\tau},\tau)=l_{\tau}(>0) \quad\mbox{and}\quad l(u_{\tau},\tau')=0 \,\,\mbox{for any}\,\, \tau'\neq \tau.
\]

The rest of the proof is divided into the following two cases:
\begin{description}
  \item[Case $(\alpha)$] For every $ 1\leq\tau\leq t,$ the $ q $ integers $ l(1,\tau),\dots,l(q,\tau)$ are all non-negative;
  \item[Case $(\beta)$] There is some $\tau\in\{1,\dots,t\}$ such that $ l(i,\tau)<0 $ for some $ i\in\{1,\dots,q\}.$
\end{description}

\subsection{The proof for the Case $(\alpha)$}        \label{ssec:Proof_for_the_Case(alpha)}

For each $ 1\leq \tau\leq t,$ we put
\[
   M_{\tau}:=\big\{i\in\{1,\dots,q\}\,|\,l(i,\tau)>0,\, l(i,\tau+1)=\dots=l(i,t)=0\big\}
\]
and
\[
   m_{\tau}:=\# M_{\tau}.
\]
Every $ M_{\tau}$ is nonempty, since $ u_{\tau}\in M_{\tau}.$

We note here that, for any $ i\in M_0 $, $ l(i,1)=l(i,2)=\dots=l(i,t)=0.$ So these $ M_{\tau}$ give the following partition of $\{1,\dots,q\}$:
\[
   \{1,\dots,q\}=M_0\cup M_1\cup \cdots\cup M_t.
\]

\begin{claim}      \label{claim:the_sum_of_any_mx_is_neq_s}
For any subset $\{\tau_1,\dots,\tau_p\}$ of $\{0,1,\dots, t\}$ with $ \tau_1<\tau_2<\dots<\tau_p,$
 \[
    m_{\tau_1}+\dots+ m_{\tau_p}\neq s.
 \]
\end{claim}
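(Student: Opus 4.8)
The plan is to argue by contradiction. Suppose there is a subset $\{\tau_1,\dots,\tau_p\}\subseteq\{0,1,\dots,t\}$ with $\tau_1<\dots<\tau_p$ and $m_{\tau_1}+\dots+m_{\tau_p}=s$. Set $L:=M_{\tau_1}\cup\dots\cup M_{\tau_p}$, so that $\#L=s$, and let $L':=\{1,\dots,q\}\setminus L$; since $q\le 2s$ we have $\#L'=q-s\le s$. The idea is to exploit the property $(P_{q,s})$ for the $s$-subset $L$: there must exist distinct indices $j_1,\dots,j_s\in\{1,\dots,q\}$ with $\{j_1,\dots,j_s\}\neq L$ such that $\prod_{i\in L}\alpha_i=\prod_{k=1}^s\alpha_{j_k}$. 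Writing $J:=\{j_1,\dots,j_s\}$, this says $\sum_{i\in L}l(i,\tau)=\sum_{i\in J}l(i,\tau)$ for every coordinate $\tau$ (working in $\langle\alpha_1,\dots,\alpha_q\rangle\otimes_{\mathbb Z}\mathbb Q$ with the basis $\eta_1,\dots,\eta_t$).

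The key observation is a maximality/telescoping fact about the sets $M_\tau$ in Case $(\alpha)$: if $\tau_p$ is the largest index appearing in our chosen subset, then every index $i\in L$ has $l(i,\tau)=0$ for all $\tau>\tau_p$, whereas — and this is where I'd focus the argument — I want to choose things so that the ``top'' coordinate is genuinely used. More precisely, let $\sigma$ be the largest element of $\{\tau_1,\dots,\tau_p\}$ with $\sigma\ge 1$ (if the set is just $\{0\}$ then $s=m_0$, and since all $\alpha_i$ with $i\in M_0$ equal $1$, the equation $\prod_{i\in L}\alpha_i=1=\prod_{k}\alpha_{j_k}$ forces, by a rank count plus the structure of $M_1,\dots,M_t$, that $J$ also consists of unit indices, hence $J\subseteq M_0$, hence $J=M_0=L$ since $\#M_0=m_0=s$, contradicting $J\neq L$). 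In the general case, look at coordinate $\sigma$: every $i\in M_{\tau_a}$ with $\tau_a<\sigma$ has $l(i,\sigma)=0$ by definition of $M_{\tau_a}$, so $\sum_{i\in L}l(i,\sigma)=\sum_{i\in M_\sigma}l(i,\sigma)>0$ (strictly positive, all terms positive). Comparing with $J$: we need $\sum_{i\in J}l(i,\sigma)$ equal to this same positive value, and in Case $(\alpha)$ all $l(i,\sigma)\ge 0$, so $J$ must contain at least one index with $l(i,\sigma)>0$.

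I expect the main obstacle to be turning the ``$J$ meets the high $M_\tau$'s'' information into a genuine contradiction with $\#J=s$. The cleanest route is probably an induction on $t$ (the number of nonzero coordinates), or a direct extremal argument: among all valid exponent-preserving replacements $J$, take one with a minimal ``footprint'' in the top coordinates, and show the top $M_\tau$ it touches is forced to coincide with the corresponding $M_\tau$ in $L$ (using that $u_\tau$ is the \emph{only} index whose top nonzero coordinate equals $l_\tau$ times a basis vector, plus non-negativity); peeling off that coordinate reduces to a smaller instance, and the base case is the rank-zero situation handled above. An alternative, which may be what the paper does, is to sum the coordinate equations with suitable weights to produce a single scalar identity — e.g. apply a linear functional that is positive on all the $\eta_\tau$ — so that $\sum_{i\in L}(\text{positive weight})=\sum_{i\in J}(\text{positive weight})$ with $\#L=\#J=s$ but the weights on $L$ strictly dominate termwise unless $J=L$. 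Either way, the heart of the matter is the non-negativity hypothesis of Case $(\alpha)$ combined with the precise definition of the $M_\tau$ as a ``staircase'' partition, and I'd organize the write-up around extracting, from the equation $\prod_{i\in L}\alpha_i=\prod_{i\in J}\alpha_i$, the conclusion $J=L$ coordinate by coordinate from the top down.
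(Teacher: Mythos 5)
Your setup is exactly the paper's: argue by contradiction, put $I:=M_{\tau_1}\cup\dots\cup M_{\tau_p}$, invoke $(P_{q,s})$ to get $J\neq I$ with $\prod_{i\in I}\alpha_i=\prod_{i\in J}\alpha_i$, and compare exponents of $\eta_1,\dots,\eta_t$ from the top coordinate down, using the non-negativity of Case $(\alpha)$. But the proof is not finished: at the decisive moment you only record that $J$ must contain \emph{some} index with $l(i,\tau_p)>0$, and then openly defer the rest (``I expect the main obstacle to be\dots'', ``the cleanest route is probably\dots''). The missing idea is that the staircase definition of the $M_\tau$ gives much more than ``$J$ meets the top block''. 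Since every $i\in M_{\tau'}$ has $l(i,\tau)=0$ for all $\tau>\tau'$, the exponent of $\eta_\tau$ on the left is zero for every $\tau>\tau_p$, and non-negativity then forces $J\cap M_\tau=\emptyset$ for all $\tau>\tau_p$; consequently the exponent of $\eta_{\tau_p}$ on the right equals $\sum_{i\in J\cap M_{\tau_p}}l(i,\tau_p)\le\sum_{i\in M_{\tau_p}}l(i,\tau_p)$, with equality if and only if $J\supseteq M_{\tau_p}$. So $J\supseteq M_{\tau_p}$ exactly; one then cancels the factors $\alpha_i$, $i\in M_{\tau_p}$, and repeats the same two-step argument for $\tau_{p-1},\tau_{p-2},\dots$, obtaining $J\supseteq M_{\tau_i}$ for each $i\ge 1$ and $J\cap M_\tau=\emptyset$ for all $\tau$ strictly between consecutive $\tau_i$'s (when $\tau_1=0$ the last stage reduces to $1=\prod_{i\in J\setminus(M_{\tau_2}\cup\dots\cup M_{\tau_p})}\alpha_i$, which by non-negativity excludes all $M_\tau$ with $1\le\tau<\tau_2$). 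Since $\#J=\#I=s$, these containments and exclusions force $J=I$, contradicting $J\neq I$. This peeling is what your ``extremal/induction'' paragraph gestures at but never executes.

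Two further cautions. Your alternative route --- summing the coordinate equations against a single positive linear functional so that ``the weights on $L$ strictly dominate termwise unless $J=L$'' --- does not work as stated: the coordinates $l(i,\tau)$ of distinct $\alpha_i$ are completely unrelated in size, so equality of one weighted sum over two $s$-element sets carries no termwise information; the argument genuinely needs the lexicographic top-down comparison, coordinate by coordinate. Also, in your singleton case $\{\tau_1\}=\{0\}$ no ``rank count plus the structure of $M_1,\dots,M_t$'' is needed: from $1=\prod_{i\in J}\alpha_i$ and $l(i,\tau)\ge 0$ one gets directly that no $i\in J$ can have any positive coordinate, i.e.\ $J\subseteq M_0$, and $\#J=s=m_0$ gives $J=M_0=I$. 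So the blueprint is right and matches the paper's strategy, but as written it is an incomplete proof, not a proof.
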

\begin{proof}
Assume on the contrary that $ m_{\tau_1}+\dots+ m_{\tau_p}=s $ for some $\tau_1,\dots,\tau_p $ with $ 0\leq\tau_1<\dots<\tau_p\leq t.$ Write
\[
   M_{\tau_1}\cup\cdots\cup M_{\tau_p}=\{i_1,\dots,i_s\}=:I.
\]

Consider the $ s $ elements $\alpha_{i_1},\dots,\alpha_{i_s}.$ By assumption, there exist distinct indices $ j_1,\dots,j_s\in\{1,\dots,q\}$ with $\{j_1,\dots,j_s\}=:J\neq I,$ such that
\begin{equation}     \label{equ:alphaix=alphajx_in_proof_of_any_mx_is_neq_s}
  \alpha_{i_1}\cdot\dots\cdot\alpha_{i_s}= \alpha_{j_1}\cdot\dots\cdot\alpha_{j_s}.
\end{equation}
We regard the $\alpha_i $ as elements in $\langle\alpha_1,\dots,\alpha_q \rangle\otimes_{\mathbb{Z}}\mathbb{Q}$ and substitute \eqref{equ:represent_alphai_by_an_adequate_basis_after_tensor_Q} into the above equation. Then we get
\begin{equation}     \label{equ:alphaix=alphajx_represent_by_using_adequate_basis}
  \eta_1^{l(i_1,1)+\dots+l(i_s,1)}\cdots\eta_t^{l(i_1,t)+\dots+l(i_s,t)}= \eta_1^{l(j_1,1)+\dots+l(j_s,1)}\cdots\eta_t^{l(j_1,t)+\dots+l(j_s,t)}.
\end{equation}
For each $ 1\leq\tau\leq t,$ the exponent of $\eta_{\tau}$ in the left hand side of \eqref{equ:alphaix=alphajx_represent_by_using_adequate_basis} is necessarily equal to that in the right hand side.

We consider the following three cases.

$\bullet\quad \tau_1\geq 1.$

Let $\tau\in\{\tau_p+1,\dots,t\}.$ Since $ I=M_{\tau_1}\cup\cdots\cup M_{\tau_p},$ the exponent of $\eta_{\tau}$ in the left hand side of \eqref{equ:alphaix=alphajx_represent_by_using_adequate_basis} is zero, and thus $ J\cap M_{\tau}=\emptyset.$ For, if $ J\cap M_{\tau}\neq\emptyset,$ then the exponent of $\eta_{\tau}$ in the right hand side of \eqref{equ:alphaix=alphajx_represent_by_using_adequate_basis} is strictly greater than zero, and this gives a contradiction.

Then by observing the exponents of $\eta_{\tau_p}$ in the both sides of \eqref{equ:alphaix=alphajx_represent_by_using_adequate_basis}, we see $ J\supseteq M_{\tau_p}.$ For, if $ J\not\supseteq M_{\tau_p},$ then the exponent of $\eta_{\tau_p}$ in the right hand side is smaller than $\sum_{i\in M_{\tau_p}}l(i,\tau_p)$ which is the exponent of $\eta_{\tau_p}$ in the left hand side, and this is a contradiction.

In summary, we have showed that
\[
   J\cap (M_{\tau_p+1}\cup\cdots\cup M_t)= \emptyset \quad\mbox{and}\quad J\supseteq M_{\tau_p}.
\]

Cancelling the elements $\alpha_i\,(i\in M_{\tau_p})$ in the both sides of \eqref{equ:alphaix=alphajx_in_proof_of_any_mx_is_neq_s}, we get
\[
   \prod_{i\in M_{\tau_1}\cup\cdots\cup M_{\tau_{p-1}}} \alpha_i= \prod_{i\in J\setminus M_{\tau_p}} \alpha_i.
\]
As before, we substitute \eqref{equ:represent_alphai_by_an_adequate_basis_after_tensor_Q} into the above equation. By comparing the exponents of $\eta_{\tau}\,(\tau_{p-1}<\tau<\tau_p)$ in the both sides, we see that
\[
   J\cap \bigcup_{\tau_{p-1}<\tau<\tau_p} M_{\tau}=\emptyset.
\]
Then by comparing the exponents of $\eta_{\tau_{p-1}}$ in the both sides, we see
\[
   J\supseteq M_{\tau_{p-1}}.
\]
Then cancelling the elements $\alpha_i\,(i\in M_{\tau_p}\cup M_{\tau_{p-1}})$ in the both sides of \eqref{equ:alphaix=alphajx_in_proof_of_any_mx_is_neq_s}, we get
\[
   \prod_{i\in M_{\tau_1}\cup\cdots\cup M_{\tau_{p-2}}} \alpha_i= \prod_{i\in J\setminus (M_{\tau_p}\cup M_{\tau_{p-1}})} \alpha_i.
\]
Continuing this process, we finally get that
\[
   J\cap \bigcup_{\tau_i<\tau<\tau_{i+1}} M_{\tau}=\emptyset \quad\mbox{and}\quad J\supseteq M_{\tau_i}, \quad 1\leq i\leq p,
\]
where we put $\tau_{p+1}:=t+1.$ Since $\# J=\# I,$ we see
\[
   J=M_{\tau_1}\cup\cdots\cup M_{\tau_p}=I,
\]
which gives a contradiction.

$\bullet\quad \tau_1=0 $ and $ p\geq 2.$

The similar argument as before shows that
\[
   J\cap \bigcup_{\tau_i<\tau<\tau_{i+1}} M_{\tau}=\emptyset \quad\mbox{and}\quad J\supseteq M_{\tau_i}, \quad 2\leq i\leq p.
\]
Then cancelling the elements $\alpha_i\,(i\in M_{\tau_2}\cup\cdots\cup M_{\tau_p})$ in the both sides of \eqref{equ:alphaix=alphajx_in_proof_of_any_mx_is_neq_s}, we get
\[
   1=\prod_{i\in M_0} \alpha_i= \prod_{i\in J\setminus (M_{\tau_2}\cup\cdots\cup M_{\tau_p})} \alpha_i.
\]
Then we see easily that
\[
   J\cap \bigcup_{1\leq\tau<\tau_2} M_{\tau}=\emptyset.
\]
Because $\# J=\# I=m_0+m_{\tau_2}+\cdots+m_{\tau_p},$ we get
\[
   J=M_0\cup M_{\tau_2}\cup\cdots\cup M_{\tau_p}=I,
\]
which gives a contradiction.

$\bullet\quad \tau_1=0 $ and $ p=1.$

In this case, \eqref{equ:alphaix=alphajx_in_proof_of_any_mx_is_neq_s} becomes
\[
   1=\prod_{i\in J} \alpha_i.
\]
We see that $ J\subseteq M_0.$ Then by $\# J=\# I,$ we get $ J=M_0=I,$ which also gives a contradiction.

In any case, we get a contradiction. Therefore the Claim \ref{claim:the_sum_of_any_mx_is_neq_s} is proved.
\end{proof}

We shall prove next the following.
\begin{claim}       \label{claim:two_types_of_mx}
$ t=s-1 $, $ q=2s,$ and one of the following two cases occurs:
 \begin{enumerate}[\rm (a)]
   \item $ m_0=s+1 $, $ m_1=m_2=\dots=m_{s-1}=1;$
   \item $ s $ is odd and $ m_0=m_1=\dots=m_{s-1}=2.$
 \end{enumerate}
\end{claim}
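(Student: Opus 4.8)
The plan is to exploit Claim \ref{claim:the_sum_of_any_mx_is_neq_s} as a purely combinatorial constraint on the positive integers $m_0, m_1, \dots, m_t$. We know $m_0 \geq 2$, each $m_\tau \geq 1$ for $1 \leq \tau \leq t$, and $m_0 + m_1 + \dots + m_t = q$ with $s < q \leq 2s$. The constraint says that no subset of $\{m_0,\dots,m_t\}$ sums to exactly $s$. First I would derive the lower bound on $t$: since $t \geq s-1$ was already assumed, I will show $t = s-1$ cannot be exceeded by much, and then pin it down exactly. The key elementary fact is: if we have nonnegative-integer ``weights'' $m_0 \geq 2$ and $m_1,\dots,m_t \geq 1$ with total $q \leq 2s$, and no subcollection sums to $s$, then $t$ is forced to be exactly $s-1$. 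Indeed, greedily accumulating the $m_\tau$ one at a time, the partial sums must ``jump over'' the value $s$; combined with $q \leq 2s$ and each increment being small (most equal to $1$), this squeezes the configuration.

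Here are the steps in order. (1) Show $t \leq s-1$, hence $t = s-1$: order the blocks so partial sums $S_k = m_{\tau_1}+\dots+m_{\tau_k}$ increase; since none equals $s$ and $S_0 = 0 < s < q = S_{\text{total}} \leq 2s$, there is a unique first index with $S_k > s$, and at that point $S_{k} - S_{k-1} = m_{\tau_k} \geq 2$, while $S_{k-1} \leq s-1$ and $S_k \geq s+1$. Counting: the blocks contributing to $S_{k-1}$ number at most $s-1$ (each $\geq 1$), and the remaining blocks contribute $q - S_{k-1} \leq 2s - S_{k-1}$; using $m_0 \geq 2$ one shows the total number of blocks $t+1$ cannot exceed $s$, giving $t \leq s-1$. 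Combined with $t \geq s-1$ this forces $t = s-1$. (2) Now $q = m_0 + m_1 + \dots + m_{s-1}$ with $s-1$ blocks of size $\geq 1$ plus $m_0 \geq 2$, so $q \geq s+1$; and $q \leq 2s$. (3) Apply the no-subset-sums-to-$s$ condition to the $s$ blocks $m_1,\dots,m_{s-1},m_0$ more carefully. If all of $m_1,\dots,m_{s-1}$ equal $1$, then $m_1+\dots+m_{s-1} = s-1$, so any single additional unit would reach $s$; hence $m_0 \neq 1$ (already known) and moreover $m_0 + (\text{any } s-2 \text{ of the ones}) = m_0 + s - 2 \neq s$ forces $m_0 \neq 2$... so I must instead argue the two cases split according to whether some $m_\tau$ ($\tau \geq 1$) exceeds $1$. (4) Case analysis: Sub-case (a) is $m_0 = s+1$, $m_1 = \dots = m_{s-1} = 1$, which gives $q = 2s$ and one checks directly that every proper subset sum lands in $\{0,1,\dots,s-1\} \cup \{s+1,\dots,2s\}$, avoiding $s$; one must show this is the only configuration with all $m_\tau = 1$ for $\tau \geq 1$, by ruling out $m_0 \notin \{s+1\}$ via subset sums $\{m_0\} \cup \{\text{some ones}\}$. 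Sub-case (b): some $m_{\tau_0} \geq 2$ with $\tau_0 \geq 1$; then a symmetry/parity argument — pairing blocks so that deleting or including $m_{\tau_0}$ shifts sums — forces all $m_\tau = 2$, hence $q = 2s$, and then $s$ must be odd (else $\frac{s}{2}$ blocks of size $2$ would sum to $s$).

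The main obstacle I anticipate is Step (4): extracting from the single inequality-type hypothesis ``no subset sums to $s$'' the rigid dichotomy between the two explicit profiles. This is a small but delicate integer-programming lemma, and the cleanest route is probably: set $d = q - s \in \{1,\dots,s\}$ (the ``deficiency''), observe that a subset sums to $s$ iff its complement sums to $d$, so the condition is self-complementary; then if any block has size $\geq 2$ one shows by an exchange argument that all must, forcing $q = 2t+2 = 2s$ and $d = s$, whence the parity constraint; and if all blocks except $m_0$ have size $1$ then $m_0 = q - (s-1) = d + 1$, and avoiding $s$ among sums $\{m_0 + (\text{subset of ones})\} = \{d+1, d+2, \dots, d+1+(s-1)\} = \{d+1,\dots,s+d\}$ requires $d+1 > s$, i.e. $d = s$, $q = 2s$, $m_0 = s+1$. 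I expect this exchange argument to be the one genuinely fiddly spot; everything else is bookkeeping with the partition $\{1,\dots,q\} = M_0 \cup M_1 \cup \dots \cup M_t$ and the already-established Claim \ref{claim:the_sum_of_any_mx_is_neq_s}.
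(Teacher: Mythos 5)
Your overall reading of the statement is the same as the paper's: after Claim \ref{claim:the_sum_of_any_mx_is_neq_s}, everything reduces to a pure subset--sum--avoidance problem for the positive integers $m_0\geq 2$, $m_1,\dots,m_t\geq 1$ with $m_0+\dots+m_t=q\leq 2s$ and the standing assumption $t\geq s-1$. But the two places where the real work lies are exactly the places your plan does not close. First, your Step (1): the greedy/partial-sum counting as described does not give $t+1\leq s$. Knowing only that the first $k-1$ blocks sum to $S_{k-1}\leq s-1$ and the remaining blocks sum to $q-S_{k-1}\leq 2s-S_{k-1}$, with each block $\geq 1$ and a couple of blocks $\geq 2$, bounds the total number of blocks only by roughly $2s$, not by $s$; using the constraint along a single greedy chain is too weak. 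The paper gets the squeeze by ordering the $m_\tau$ \emph{decreasingly}, letting $r$ be the number of singleton blocks, and applying Claim \ref{claim:the_sum_of_any_mx_is_neq_s} to a \emph{mixed} subset (the large blocks $\sigma(0),\dots,\sigma(p-1)$ together with $s-\tilde m$ singletons) to force $\tilde m+r\leq s-1$, hence $r\leq m_{\sigma(p)}-2$; only then does the chain of inequalities $2s\geq q\geq m_0+p\,m_{\sigma(p)}+2(t-r-p)+r\geq m_0-2+(m_{\sigma(p)}-2)(p-1)+2s$ collapse everything to $t=s-1$, $q=2s$, and the two profiles simultaneously. In other words, $t\leq s-1$ is not obtained first and separately; it comes out of the same rigidity argument you have deferred.

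Second, the dichotomy in your Step (4) is stated in a way that is literally false and, even after correction, is left unproved. ``If any block has size $\geq 2$ then all must'' cannot be right as written, since $m_0\geq 2$ always while case (a) has $m_1=\dots=m_{s-1}=1$; you must restrict to blocks $M_\tau$ with $\tau\geq 1$. More importantly, the corrected statement (some $m_{\tau_0}\geq 2$ with $\tau_0\geq 1$ forces $m_0=m_1=\dots=m_{s-1}=2$) is precisely the delicate integer-programming lemma, and ``an exchange argument'' is not a proof of it; the paper proves it via the inequalities above plus one further application of Claim \ref{claim:the_sum_of_any_mx_is_neq_s} to rule out $m_{\sigma(p)}\geq 3$ (which would give $m_{\sigma(1)}+m_{\sigma(t)}=s$). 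Your treatment of the all-singletons case ($m_0=d+1$ and the sums $\{d+1,\dots,d+s\}$ must avoid $s$, forcing $d=s$, $m_0=s+1$) and of the parity constraint (if all $m_\tau=2$ and $s$ were even, then $m_1+\dots+m_{s/2}=s$) are fine and match the paper, but they only kick in after the rigidity step you have flagged as ``fiddly'' and not supplied. As it stands, the proposal is a correct outline of the target configuration with the central counting argument missing.
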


\begin{proof}
Let $\sigma:\{1,\dots,t\}\to\{1,\dots,t\}$ be a bijection such that
\[
   m_{\sigma(1)}\geq m_{\sigma(2)}\geq \cdots\geq m_{\sigma(t)}.
\]
We also put $\sigma(0)=0.$

Since $ m_{\sigma(0)}+ m_{\sigma(1)}+\cdots+ m_{\sigma(t)}=q>s,$ there is a minimum natural number $ p $ such that
\[
   m_{\sigma(0)}+m_{\sigma(1)}+\cdots+ m_{\sigma(p)}>s.
\]

We consider the following two cases.

$\bullet\quad p=0.$

$ p=0 $ means that $ m_0>s.$ Then since $ t\geq s-1 $ and every $ m_{\tau}$ ($ 1\leq\tau\leq t $) is a positive integer, we conclude that
\begin{align*}
   2s\geq q&=m_0+m_1+\dots+m_t\geq (s+1)+t\geq 2s.
\end{align*}
It follows that $ t=s-1 $, $ q=2s $ and $ m_0=s+1 $, $ m_1=m_2=\dots=m_t=1.$ This shows the case (a) occurs.
Hence Claim \ref{claim:two_types_of_mx} holds when $ p=0.$

$\bullet\quad p\geq 1.$

By the definition of $ p $ and using Claim \ref{claim:the_sum_of_any_mx_is_neq_s}, we get
\[
   m_{\sigma(0)}+m_{\sigma(1)}+\cdots+ m_{\sigma(p)}\geq s+1
\]
and
\[
   m_{\sigma(0)}+\cdots+ m_{\sigma(p-1)}=:\tilde{m}\leq s-1.
\]
In particular, we see
\[
   m_{\sigma(p)}\geq s+1-\tilde{m}\geq 2.
\]

Let $ r:=\#\big\{\tau\in\{1,\dots,t\}\,|\, m_{\tau}=1\big\}.$ Then $ 0\leq r\leq t-p $ and
\[
   m_{\sigma(t-r)}\geq 2,\quad m_{\sigma(t-r+1)}=\dots=m_{\sigma(t)}=1.
\]
By Claim \ref{claim:the_sum_of_any_mx_is_neq_s}, $\tilde{m}+r\leq s-1.$ For, if $\tilde{m}+r\geq s,$ by taking $(s-\tilde{m})$ distinct indices $ i_1,\dots, i_{s-\tilde{m}}$ in $\{\sigma(t-r+1),\dots,\sigma(t)\},$ we have
\[
   \tilde{m}+m_{i_1}+\dots+m_{i_{s-\tilde{m}}}=s,
\]
which contradicts Claim \ref{claim:the_sum_of_any_mx_is_neq_s}. Thus
\[
   r\leq s-1-\tilde{m}\leq m_{\sigma(p)}-2.
\]
Then by $ t\geq s-1,$ we conclude that
\begin{align*}
   2s\geq q&=m_0+m_{\sigma(1)}+\cdots+ m_{\sigma(p)}+(m_{\sigma(p+1)}+\cdots+m_{\sigma(t-r)})+r
\\ &\geq m_0+ p\cdot m_{\sigma(p)}+2(t-r-p)+r
\\ &= m_0+ p\cdot m_{\sigma(p)}+2t-2p-r
\\ &\geq m_0-2+(m_{\sigma(p)}-2)(p-1)+2s
\\ &\geq 2s.
\end{align*}
It follows that $ t=s-1 $, $ q=2s $, $ m_0=2 $, $ m_{\sigma(1)}=\dots=m_{\sigma(p)}$, $ r=s-1-\tilde{m}=m_{\sigma(p)}-2,$ and $(m_{\sigma(p)}-2)(p-1)=0.$

If $ m_{\sigma(p)}=2,$ then $ r=0 $ and $ m_{\sigma(1)}=\dots=m_{\sigma(t)}=2.$
If $ m_{\sigma(p)}\neq 2,$ then $ p=1 $ and $ m_{\sigma(1)}\geq 3.$ Thus $ r=m_{\sigma(1)}-2\geq 1 $ and $ m_{\sigma(1)}=s+1-\tilde{m}=s+1-m_0=s-1.$ It follows that
\[
   m_{\sigma(1)}+m_{\sigma(t)}=s,
\]
which contradicts Claim \ref{claim:the_sum_of_any_mx_is_neq_s}.
So it is impossible that  $ m_{\sigma(p)}\neq 2,$ and thus we have $ m_0=m_1=\dots=m_t=2.$

If $ s $ is even, then
\[
   m_1+m_2+\dots+m_{s/2}=s,
\]
which contradicts Claim \ref{claim:the_sum_of_any_mx_is_neq_s}. Thus $ s $ is odd and the case (b) occurs.
The proof of Claim \ref{claim:two_types_of_mx} is completed.
\end{proof}

By Claim \ref{claim:two_types_of_mx}, we know that $ t=s-1 $ and $ q=2s.$ So, the conclusion (i) of Lemma \ref{lem:the_third_combinatorial_lemma} is true, and for proving the conclusion (ii) of Lemma \ref{lem:the_third_combinatorial_lemma}, we need to show that $(\alpha_1,\dots,\alpha_{2s})$ is of the type (A) or type (B).

If the case (a) in Claim \ref{claim:two_types_of_mx} occurs, then
\[
   M_1=\{u_1\},\, M_2=\{u_2\},\dots, M_{s-1}=\{u_{s-1}\} \quad\mbox{and}\quad \# M_0=s+1.
\]
We see that $\alpha_i=1 $ for any $ i\in\{1,\dots,2s\}\setminus\{u_1,\dots,u_{s-1}\}.$
Put
\[
   \beta_1:=\alpha_{u_1},\,\beta_2:=\alpha_{u_2},\dots,\beta_{s-1}:=\alpha_{u_{s-1}}.
\]
Since
\[
   \alpha_{u_1}=\eta_1^{l_1},\,\alpha_{u_2}=\eta_2^{l_2},\dots,\alpha_{u_{s-1}}=\eta_{s-1}^{l_{s-1}},
\]
where $\{\eta_1,\dots,\eta_{s-1}\}$ is a basis of the $\mathbb{Q}$-vector space $\langle\alpha_1,\dots,\alpha_{2s}\rangle\otimes_{\mathbb{Z}}\mathbb{Q},$ we see that $\beta_1,\dots,\beta_{s-1}$ are multiplicatively independent elements in $ G,$ and thus $\{\beta_1,\dots,\beta_{s-1}\}$ is a basis of $\langle\alpha_1,\dots,\alpha_{2s}\rangle.$ After a suitable change of indices, we have
\[
   \alpha_1:\alpha_2:\dots:\alpha_{2s}= 1:1:\dots:1:\beta_1:\dots:\beta_{s-1},
\]
which is a special case of the type (B).

Now assume the case (b) in Claim \ref{claim:two_types_of_mx} occurs. We know that $ s $ is odd and $\# M_i=2 $ for any $ 0\leq i\leq s-1.$ We assume that
\[
   M_1=\{u_1,v_1\},\dots, M_{s-1}=\{u_{s-1},v_{s-1}\} \quad\mbox{and}\quad M_0=\{u_0,v_0\}.
\]
Regarding the $\alpha_i $ as elements in $\langle\alpha_1,\dots,\alpha_q \rangle\otimes_{\mathbb{Z}}\mathbb{Q},$ we have
\begin{equation}     \label{equ:represent_alphai_final_step_in_Case(alpha)}
  \alpha_{u_0}=\alpha_{v_0}=1 \quad\mbox{and}\quad \alpha_{u_{\tau}}=\eta_{\tau}^{l_{\tau}},\, \alpha_{v_{\tau}}=\eta_1^{l(v_{\tau},1)}\cdots\eta_{\tau}^{l(v_{\tau},{\tau})},\, 1\leq \tau\leq s-1,
\end{equation}
where $ l(v_{\tau},{\tau})>0 $ for any $ 1\leq \tau\leq s-1.$

We shall show that $\alpha_{u_{\tau}}=\alpha_{v_{\tau}}$ for any $ 1\leq \tau\leq s-1.$

Noting that $(s-1)/2 $ is a positive integer, we consider first the case when $ 1\leq \tau\leq (s-1)/2.$ Put
\[
   I:=\{u_{\tau}\}\cup M_{(s+1)/2}\cup\cdots\cup M_{s-1}.
\]
Then $\# I=s,$ and by assumption, there exists a $ J\subseteq\{1,\dots,2s\}$ with $\# J=s $ and $ J\neq I,$ such that
\begin{equation}     \label{equ:alphaI=alphaJ_in_proof_Case(alpha)_final}
   \prod_{j\in I} \alpha_j= \prod_{j\in J} \alpha_j.
\end{equation}
Using \eqref{equ:represent_alphai_final_step_in_Case(alpha)}, we represent the both sides of \eqref{equ:alphaI=alphaJ_in_proof_Case(alpha)_final} with $\eta_1,\dots,\eta_{s-1}.$ For $ i\in\{(s+1)/2,\dots,s-1\},$ by observing the exponents of $\eta_i $ in the both sides of \eqref{equ:alphaI=alphaJ_in_proof_Case(alpha)_final}, we see $ J\supseteq M_i.$ Then by $\# J=\# I,$ we know that $ J\cap (M_0\cup\cdots\cup M_{(s-1)/2})$ has exactly one element, say $ j_0.$ Cancelling the elements $\alpha_j\,(j\in M_{(s+1)/2}\cup\cdots\cup M_{s-1})$ in the both sides of \eqref{equ:alphaI=alphaJ_in_proof_Case(alpha)_final}, we get $\alpha_{u_{\tau}}=\alpha_{j_0}.$ Then by \eqref{equ:represent_alphai_final_step_in_Case(alpha)} and $ J\neq I,$ we see $ j_0=v_{\tau}.$ Therefore $\alpha_{u_{\tau}}=\alpha_{v_{\tau}}.$

Now let $(s+1)/2\leq \tau\leq s-1.$ Put
\[
   I:=M_0\cup\cdots\cup M_{(s-3)/2}\cup\{u_{\tau}\}.
\]
Then $\# I=s,$ and by assumption, there exists a $ J\subseteq\{1,\dots,2s\}$ with $\# J=s $ and $ J\neq I,$ such that
\begin{equation}     \label{equ:alphaI=alphaJ_in_proof_Case(alpha)}
   \prod_{j\in I} \alpha_j= \prod_{j\in J} \alpha_j.
\end{equation}
Using \eqref{equ:represent_alphai_final_step_in_Case(alpha)}, we represent the both sides of \eqref{equ:alphaI=alphaJ_in_proof_Case(alpha)} with $\eta_1,\dots,\eta_{s-1}.$ For $ i\in\{(s-1)/2,\dots,s-1\}\setminus\{\tau\},$ since the exponent of $\eta_i $ in the left hand side of \eqref{equ:alphaI=alphaJ_in_proof_Case(alpha)} is zero, we see $ J\cap M_i=\emptyset.$ Then by observing the exponents of $\eta_{\tau}$ in the both sides of \eqref{equ:alphaI=alphaJ_in_proof_Case(alpha)}, we see $ J\cap M_{\tau}$ has exactly one element. By $\# J=\# I $ and $ J\neq I,$ we get
\[
   J=M_0\cup\cdots\cup M_{(s-3)/2}\cup\{v_{\tau}\}.
\]
Cancelling the elements $\alpha_j\,(j\in M_0\cup\cdots\cup M_{(s-3)/2})$ in the both sides of \eqref{equ:alphaI=alphaJ_in_proof_Case(alpha)}, we get $\alpha_{u_{\tau}}=\alpha_{v_{\tau}}.$

So we have proved that
\[
   \alpha_{u_{\tau}}=\alpha_{v_{\tau}},\quad 1\leq \tau\leq s-1.
\]
Then as before, putting
\[
   \beta_{\tau}:=\alpha_{u_{\tau}},\quad 1\leq \tau\leq s-1,
\]
we see $\{\beta_1,\dots,\beta_{s-1}\}$ is a basis of $\langle\alpha_1,\dots,\alpha_{2s}\rangle,$ and after a suitable change of indices, we have
\[
   \alpha_1:\alpha_2:\dots:\alpha_{2s}= 1:1:\beta_1:\beta_1:\beta_2:\beta_2:\cdots:\beta_{s-1}:\beta_{s-1},
\]
which shows $(\alpha_1,\dots,\alpha_{2s})$ is of the type (A). This finishes the proof for the Case $(\alpha).$

\subsection{The proof for the Case $(\beta)$}        \label{ssec:Proof_for_the_Case(beta)}

Changing the indices of $\eta_1,\dots,\eta_t $ if necessary, we may assume that $ l(i,t)<0 $ for some $ i\in\{1,\dots,q\}.$

Put
\begin{align*}
   N_+ &:=\big\{i\in\{1,\dots,q\}\,|\, l(i,t)>0\big\},\quad n_+:=\# N_+,
\\ N_0 &:=\big\{i\in\{1,\dots,q\}\,|\, l(i,t)=0\big\},\quad n_0:=\# N_0,
\\ N_- &:=\big\{i\in\{1,\dots,q\}\,|\, l(i,t)<0\big\},\quad n_-:=\# N_-,
\end{align*}
where $ l(i,t)$ are the integers in \eqref{equ:represent_alphai_by_an_adequate_basis_after_tensor_Q}. Because $ l(u_t,t)=l_t>0,$ and $ M_0\subseteq N_0,$ and $ u_1,\dots, u_{t-1}\in N_0,$ we see that
\[
   n_+\geq 1,\quad n_-\geq 1, \quad\mbox{and}\quad n_0\geq m_0+(t-1)\geq t+1.
\]

Put $\tilde{n}:=\min\{n_+,n_-\}$ and assume $ N_0=\{p_1,\dots,p_{n_0}\}.$

\begin{claim}      \label{claim:alphai_in_N0_satisfy_induction_hypothesis}
$ 2\leq s-\tilde{n}<n_0\leq 2(s-\tilde{n}),$ and the $ n_0$-tuple $(\alpha_{p_1},\dots,\alpha_{p_{n_0}})$ has the property $(P_{n_0,s-\tilde{n}}).$
\end{claim}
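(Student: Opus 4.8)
The plan is to verify the two assertions separately, and then invoke the induction hypothesis on the $n_0$-tuple $(\alpha_{p_1},\dots,\alpha_{p_{n_0}})$ at a later stage of the proof. First I would establish the numerical inequalities $2\le s-\tilde n<n_0\le 2(s-\tilde n)$. Recall $\tilde n=\min\{n_+,n_-\}$, and we already know $n_+\ge1$, $n_-\ge1$, and $n_0\ge t+1\ge s$ (using $t\ge s-1$, so in fact $n_0\ge s$; I should double-check whether one wants $t\ge s-1$ here or the sharper count $n_0\ge m_0+(t-1)\ge 2+(s-2)=s$). Since $q=n_++n_0+n_-\le 2s$ and both $n_+,n_-\ge1$, we get $\tilde n\le (n_++n_-)/2\le (2s-n_0)/2=s-n_0/2$, hence $n_0\le 2(s-\tilde n)$, which is the upper bound. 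For the lower bound $n_0>s-\tilde n$: since $n_0\ge s$ and $\tilde n\ge1$, we get $n_0\ge s>s-\tilde n$. Finally $s-\tilde n\ge2$ should follow from $\tilde n\le s-2$; this in turn comes from $2\tilde n\le n_++n_-=q-n_0\le 2s-n_0\le 2s-s=s$... which only gives $\tilde n\le s/2$, not $\le s-2$. So the sharp bound must use more: namely $n_0\ge s$ can likely be improved, or one uses $n_0\le q-n_+-n_-\le q-2\le 2s-2$ together with $q-n_0=n_++n_-\ge 2\tilde n$ plus the constraint that $n_0\le 2(s-\tilde n)$ forces $\tilde n\le s-n_0/2\le s-1$; to get $s-\tilde n\ge2$ i.e. $\tilde n\le s-2$ one needs $n_0\ge4$, which holds since $n_0\ge t+1\ge s\ge3$. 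So $s-\tilde n\ge s-(s-2)$ requires $\tilde n\le s-2$, equivalently $n_0/2\le s-\tilde n$ combined with $n_0\ge4$: from $\tilde n\le s-n_0/2\le s-2$. That closes it.

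The main work is the second assertion: the $n_0$-tuple $(\alpha_{p_1},\dots,\alpha_{p_{n_0}})$ has the property $(P_{n_0,s-\tilde n})$. Here is the key idea. Take any $(s-\tilde n)$ distinct indices among $\{p_1,\dots,p_{n_0}\}$; I must produce a different $(s-\tilde n)$-subset of $N_0$ with the same product. The trick is to pad the chosen set up to size $s$ by adjoining $\tilde n$ indices from $N_+$ and (if $\tilde n=n_+\le n_-$) or from $N_-$, chosen so that the contributions to the exponent of $\eta_t$ from the padding elements is "extremal" — e.g. if $\tilde n=n_+$, adjoin \emph{all} of $N_+$; then in any competing $s$-subset $J$ realizing the same product, comparing the exponent of $\eta_t$ forces $J$ to contain no elements of $N_+$ with... wait, one needs to control signs carefully. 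The cleaner approach: if $\tilde n=n_+$, form $I=\{\text{chosen }(s-\tilde n)\text{ indices in }N_0\}\cup N_+$, apply property $(P_{q,s})$ to get $J\ne I$ with $\prod_{i\in I}\alpha_i=\prod_{j\in J}\alpha_j$, then substitute \eqref{equ:represent_alphai_by_an_adequate_basis_after_tensor_Q} and compare the exponent of $\eta_t$: the left side has exponent $\sum_{i\in N_+}l(i,t)>0$, the right side has exponent $\sum_{j\in J\cap N_+}l(j,t)-\sum_{j\in J\cap N_-}|l(j,t)|$. Since the maximum possible value of the right side over $s$-subsets is $\sum_{i\in N_+}l(i,t)$ (achieved only when $J\supseteq N_+$ and $J\cap N_-=\emptyset$, because $\#N_+=\tilde n\le\#J$ and every $N_+$-term is $\ge1$ while every $N_-$-term is $\le-1$), equality forces $J\supseteq N_+$ and $J\cap N_-=\emptyset$. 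Then cancelling $\prod_{i\in N_+}\alpha_i$ from both sides leaves $\prod\alpha_i=\prod\alpha_j$ over $(s-\tilde n)$-subsets of $N_0$, with $J\setminus N_+\ne I\setminus N_+$ (since $J\ne I$ and both agree off $N_0$), giving exactly the required relation. The symmetric case $\tilde n=n_-$ is handled by adjoining all of $N_-$ and reversing the inequality (minimizing the $\eta_t$-exponent instead).

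I expect the main obstacle to be the bookkeeping in the extremality argument: one must be careful that when $\tilde n=n_+<n_-$, padding with all of $N_+$ leaves room in the $s$-subset — i.e. $(s-\tilde n)+\tilde n=s$ exactly, so there is no slack, and one must check $\#((I)\cap N_0)=s-\tilde n\le n_0$, which is the inequality $n_0\ge s-\tilde n$ already proved. A subtle point is ruling out that the competing set $J$ coincides with $I$: this uses $J\ne I$ from $(P_{q,s})$ together with the fact that $J$ and $I$ have the same intersection with $N_+\cup N_-$ (namely $N_+$ and $\emptyset$ respectively, once extremality is established), forcing the difference to live in $N_0$. I would also remark that the case $\tilde n=n_+=n_-$ can be handled by either choice. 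Once the claim is proved, the induction hypothesis (for the parameter $s-\tilde n\le s-1$, legitimate since $\tilde n\ge1$) applies to $(\alpha_{p_1},\dots,\alpha_{p_{n_0}})$ — note this tuple contains the unit $1$ since $M_0\subseteq N_0$ — yielding rank and structural information that will be combined with the $N_+,N_-$ data in the remainder of the proof.
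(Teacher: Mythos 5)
Your treatment of the main assertion, the property $(P_{n_0,s-\tilde n})$, is exactly the paper's argument: pad an arbitrary $(s-\tilde n)$-subset $I\subseteq N_0$ with all of $N_+$ (resp.\ all of $N_-$) when $\tilde n=n_+$ (resp.\ $\tilde n=n_-$), apply $(P_{q,s})$ to get a competing $s$-set $J_1$, compare the exponent of $\eta_t$ via \eqref{equ:represent_alphai_by_an_adequate_basis_after_tensor_Q}, use extremality to force $J_1\supseteq N_+$ and $J_1\cap N_-=\emptyset$, and cancel to land back in $N_0$ with $J_1\setminus N_+\neq I$; your handling of the ``$J\neq I$'' bookkeeping and of the unit element ($M_0\subseteq N_0$) also matches the paper. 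The only flaw is in your justification of $s-\tilde n\geq 2$: you end up invoking $n_0\geq 4$ and claim it follows from $n_0\geq t+1\geq s\geq 3$, which fails when $s=3$ (at this stage $n_0=3$ cannot be excluded). The bound you need, $\tilde n\leq s-2$, nevertheless follows immediately from inequalities you already have: $2\tilde n\leq n_++n_-=q-n_0\leq 2s-3$ since $n_0\geq s\geq 3$, and as $\tilde n$ is an integer this yields $\tilde n\leq s-2$ --- this is precisely the paper's computation, and even the estimate $\tilde n\leq s/2$ that you discarded suffices once integrality is used. With that one-line repair the proposal is complete and coincides with the paper's proof.
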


\begin{proof}
Since $ q\leq 2s,$ we get
\[
   2s\geq q=n_0+n_++n_-\geq n_0+2\tilde{n},
\]
which implies $ n_0\leq 2(s-\tilde{n}).$ By $ t\geq s-1,$ we get
\[
   n_0\geq t+1\geq s.
\]
Thus $ n_0>s-\tilde{n}.$ Since $ s\geq 3,$ we know that
\[
   2s\geq n_0+n_++n_-\geq 3+2\tilde{n},
\]
which implies $ s-\tilde{n}\geq 2.$

Next we prove that $(\alpha_{p_1},\dots,\alpha_{p_{n_0}})$ has the property $(P_{n_0,s-\tilde{n}}).$

Consider first the case when $\tilde{n}=n_+.$ Take a subset $ I $ of $ N_0 $ with $\# I=s-n_+.$ Since $\# (I\cup N_+)=s,$ by assumption, there exists a subset $ J_1 $ of $\{1,\dots,q\}$ with $\# J_1=s $ and $ J_1\neq (I\cup N_+),$ such that
\begin{equation}      \label{equ:alpha(IcupN+)=alphaJ1}
   \prod_{i\in (I\cup N_+)}\alpha_i= \prod_{i\in J_1}\alpha_i.
\end{equation}
Regarding the $\alpha_i $ as elements in $\langle\alpha_1,\dots,\alpha_q \rangle\otimes_{\mathbb{Z}}\mathbb{Q}$ and substituting \eqref{equ:represent_alphai_by_an_adequate_basis_after_tensor_Q} into \eqref{equ:alpha(IcupN+)=alphaJ1}, we represent the both sides of \eqref{equ:alpha(IcupN+)=alphaJ1} with $\eta_1,\dots, \eta_t.$ Observing the exponents of $\eta_t,$ we see
\[
   J_1\supseteq N_+ \quad\mbox{and}\quad J_1\cap N_-=\emptyset.
\]
Then $ J_1\setminus N_+=:J\subseteq N_0 $ and $\# J=s-n_+.$ By $ J_1\neq (I\cup N_+),$ we also have $ J\neq I.$ Cancelling the elements $\alpha_i\, (i\in N_+)$ in the both sides of \eqref{equ:alpha(IcupN+)=alphaJ1}, we get
\[
   \prod_{i\in I}\alpha_i= \prod_{i\in J}\alpha_i.
\]
This shows $(\alpha_{p_1},\dots,\alpha_{p_{n_0}})$ has the property $(P_{n_0,s-n_+}).$

When $\tilde{n}=n_-,$ similar argument shows that $(\alpha_{p_1},\dots,\alpha_{p_{n_0}})$ has the property $(P_{n_0,s-n_-}).$ Therefore the Claim \ref{claim:alphai_in_N0_satisfy_induction_hypothesis} is proved.
\end{proof}

\begin{claim}        \label{claim:the_subgroup_generated_by_alphaN0_is_of_rank_t-1}
The subgroup $\langle\alpha_{p_1},\dots,\alpha_{p_{n_0}}\rangle $ of $ G $ generated by $\alpha_{p_1},\dots,\alpha_{p_{n_0}}$ is of rank $(t-1).$
\end{claim}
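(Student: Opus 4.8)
The plan is to pin down the rank by proving the two inequalities $\mathrm{rank}\{\alpha_{p_1},\dots,\alpha_{p_{n_0}}\}\le t-1$ and $\ge t-1$ separately, both directly from the adequate-basis data of Observation \ref{obsv:adequate_basis_exists_by_tensor_Q}.

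For the upper bound, I would use that each $p_j$ lies in $N_0$, so $l(p_j,t)=0$; hence, writing $\alpha_i$ additively in the $\mathbb{Q}$-vector space $V:=\langle\alpha_1,\dots,\alpha_q\rangle\otimes_{\mathbb{Z}}\mathbb{Q}$ via \eqref{equ:represent_alphai_by_an_adequate_basis_after_tensor_Q}, every $\alpha_{p_j}$ lies in the $(t-1)$-dimensional subspace $W:=\mathbb{Q}\eta_1+\dots+\mathbb{Q}\eta_{t-1}$ of $V$. Since $\langle\alpha_{p_1},\dots,\alpha_{p_{n_0}}\rangle$ is a subgroup of $\langle\alpha_1,\dots,\alpha_q\rangle$, which is embedded in $V$, its rank equals the $\mathbb{Q}$-dimension of its span in $V$, and that span is contained in $W$. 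Therefore $\mathrm{rank}\{\alpha_{p_1},\dots,\alpha_{p_{n_0}}\}\le t-1$.

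For the lower bound, I would invoke the facts recorded just before the claim: $u_1,\dots,u_{t-1}\in N_0$, so $\{u_1,\dots,u_{t-1}\}\subseteq\{p_1,\dots,p_{n_0}\}$, and $\alpha_{u_\tau}=\eta_\tau^{l_\tau}$ with $l_\tau>0$ for $1\le\tau\le t-1$. As $\eta_1,\dots,\eta_{t-1}$ are $\mathbb{Q}$-linearly independent in $V$ and the $l_\tau$ are nonzero, the elements $\alpha_{u_1},\dots,\alpha_{u_{t-1}}$ are multiplicatively independent in $G$, so the subgroup $\langle\alpha_{u_1},\dots,\alpha_{u_{t-1}}\rangle$ — which is contained in $\langle\alpha_{p_1},\dots,\alpha_{p_{n_0}}\rangle$ — already has rank $t-1$. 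Combining the two inequalities yields the claim.

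I do not expect a genuine obstacle here: the statement is essentially bookkeeping on top of the $\mathbb{Q}$-tensoring construction. The only point that warrants a line of care is the identification of the rank of a finitely generated subgroup of $G$ with the dimension of its image in $V$; this holds because that subgroup sits inside $\langle\alpha_1,\dots,\alpha_q\rangle$, which by construction embeds in $V$, and tensoring a finitely generated free abelian group with $\mathbb{Q}$ preserves rank.
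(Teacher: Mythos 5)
Your proposal is correct and follows essentially the same route as the paper: the upper bound comes from the fact that every $\alpha_{p_j}$ ($p_j\in N_0$) lies in the $\mathbb{Q}$-span of $\eta_1,\dots,\eta_{t-1}$ inside $\langle\alpha_1,\dots,\alpha_q\rangle\otimes_{\mathbb{Z}}\mathbb{Q}$, and the lower bound from the multiplicative independence of $\alpha_{u_1}=\eta_1^{l_1},\dots,\alpha_{u_{t-1}}=\eta_{t-1}^{l_{t-1}}$, which all lie among the $\alpha_{p_j}$. The only cosmetic difference is that you phrase the upper bound via dimension of the $\mathbb{Q}$-span rather than via dependence of any $t$ elements, which is the same argument.
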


\begin{proof}
For any element $\beta\in\langle\alpha_{p_1},\dots,\alpha_{p_{n_0}}\rangle,$ if we regard $\beta $ as an element in $\langle\alpha_1,\dots,\alpha_q \rangle\otimes_{\mathbb{Z}}\mathbb{Q},$ then it can be represented by $\eta_1,\dots,\eta_{t-1}.$ This fact implies that any $ t $ elements in $\langle\alpha_{p_1},\dots,\alpha_{p_{n_0}}\rangle $ are multiplicatively dependent, and therefore
\[
   {\rm rank}\{\alpha_{p_1},\dots,\alpha_{p_{n_0}}\}\leq t-1.
\]

On the other hand, we know $\alpha_{u_1},\dots,\alpha_{u_{t-1}}\in\langle\alpha_{p_1},\dots,\alpha_{p_{n_0}}\rangle.$ And, as elements in $\langle\alpha_1,\dots,\alpha_q \rangle\otimes_{\mathbb{Z}}\mathbb{Q},$
\[
   \alpha_{u_1}=\eta_1^{l_1},\dots,\alpha_{u_{t-1}}=\eta_{t-1}^{l_{t-1}}.
\]
Thus, $\alpha_{u_1},\dots,\alpha_{u_{t-1}}$ are multiplicatively independent elements in $ G $ and it follows that
\[
   {\rm rank}\{\alpha_{p_1},\dots,\alpha_{p_{n_0}}\}\geq t-1.
\]
So we get ${\rm rank}\{\alpha_{p_1},\dots,\alpha_{p_{n_0}}\}=t-1,$ which proves Claim \ref{claim:the_subgroup_generated_by_alphaN0_is_of_rank_t-1}.
\end{proof}

By Claims \ref{claim:alphai_in_N0_satisfy_induction_hypothesis} and \ref{claim:the_subgroup_generated_by_alphaN0_is_of_rank_t-1}, we can apply the induction hypothesis to the $ n_0$-tuple $(\alpha_{p_1},\dots,\alpha_{p_{n_0}})$ and conclude that
\[
   t-1={\rm rank}\{\alpha_{p_1},\dots,\alpha_{p_{n_0}}\}\leq s-\tilde{n}-1\leq s-2.
\]
Then by $ t\geq s-1,$ we see
\[
   t=s-1,\quad \tilde{n}=1, \quad\mbox{and}\quad {\rm rank}\{\alpha_{p_1},\dots,\alpha_{p_{n_0}}\}=s-2.
\]
Applying the induction hypothesis to the $ n_0$-tuple $(\alpha_{p_1},\dots,\alpha_{p_{n_0}})$ again, we see that $ n_0=2(s-1)$ and $(\alpha_{p_1},\dots,\alpha_{p_{n_0}})$ is of the type (A) or of the type (B). So
\[
   2s\geq q=n_0+n_++n_-\geq 2(s-1)+2=2s.
\]
It follows that $ q=2s $ and $ n_+=n_-=1.$

We have proved that the conclusion (i) of Lemma \ref{lem:the_third_combinatorial_lemma} holds.
We shall show next that $(\alpha_1,\dots,\alpha_{2s})$ is of the type (B), and this implies that the conclusion (ii) of Lemma \ref{lem:the_third_combinatorial_lemma} also holds.

First, we show the following.
\begin{claim}       \label{claim:alphaN0_is_not_of_type(A)}
It is impossible that the $(2s-2)$-tuple $(\alpha_{p_1},\dots,\alpha_{p_{n_0}})$ is of the type {\rm (A)}.
\end{claim}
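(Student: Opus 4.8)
The plan is to argue by contradiction: assume the $(2s-2)$-tuple $(\alpha_{p_1},\dots,\alpha_{p_{n_0}})$ is of type~(A) and contradict the property $(P_{q,s})$ of the full tuple. At this stage we already know $t=s-1$, $\tilde n=1$, $q=2s$, $n_0=2s-2$ and $n_+=n_-=1$; write $N_+=\{e\}$ and $N_-=\{f\}$, so $l(e,t)>0>l(f,t)$, and note that $e$ must equal $u_t$ (it is the unique index with $l(\,\cdot\,,t)>0$, and $l(u_t,t)=l_t>0$), whence, in $V:=\langle\alpha_1,\dots,\alpha_q\rangle\otimes_{\mathbb Z}\mathbb Q$, one has $\alpha_e=\eta_t^{l_t}$ by \eqref{equ:represent_alphai_by_an_adequate_basis_after_tensor_Q}. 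Since the $n_0$-tuple is of type~(A), its parameter $s-\tilde n=s-1$ is odd, so $s$ is even and $s\ge 4$. Because $M_0\subseteq N_0$ and $m_0\ge 2$, some $\alpha_{p_i}$ equals $1$, which forces the common factor in the type~(A) representation to be trivial or the inverse of a basis element; after an integral change of the basis of $\langle\alpha_{p_1},\dots,\alpha_{p_{n_0}}\rangle$ (of rank $t-1=s-2$ by Claim~\ref{claim:the_subgroup_generated_by_alphaN0_is_of_rank_t-1}) I may therefore assume, up to reindexing, that the $N_0$-entries are exactly $\alpha_{p_1}=\alpha_{p_2}=1$ and $\alpha_{p_{2j+1}}=\alpha_{p_{2j+2}}=\gamma_j$ $(1\le j\le s-2)$, where $\{\gamma_1,\dots,\gamma_{s-2}\}$ is a basis of $\langle\alpha_{p_1},\dots,\alpha_{p_{n_0}}\rangle$; in particular $M_0=\{p_1,p_2\}$. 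Inside $V$, of dimension $s-1$, the set $\{\gamma_1,\dots,\gamma_{s-2},\eta_t\}$ is then a $\mathbb Q$-basis, and I write $\alpha_f=\gamma_1^{y_1}\cdots\gamma_{s-2}^{y_{s-2}}\eta_t^{b}$ with $b=l(f,t)<0$, and set $a:=l_t>0$. Two bookkeeping facts will be used throughout: $e$ and $f$ are the only indices whose $\alpha$-value has nonzero $\eta_t$-exponent (namely $a$ and $b$); and for a subset $J$ of $N_0$, the $\gamma_j$-exponent of $\prod_{i\in J}\alpha_i$ equals the number of indices in $J$ whose $\alpha$-value is $\gamma_j$, hence lies in $\{0,1,2\}$, while its $\eta_t$-exponent is $0$.

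The core of the argument is three applications of $(P_{q,s})$ to $s$-element index sets $I$ assembled from complete pairs $\{p_{2j+1},p_{2j+2}\}$, the two indices of $M_0$, and the indices $e$ and $f$. In each case $\prod_{i\in I}\alpha_i$ has $\eta_t$-exponent $0$, so the matching set $J$ provided by $(P_{q,s})$ (with $\#J=s$, $J\ne I$, $\prod_{i\in J}\alpha_i=\prod_{i\in I}\alpha_i$) must satisfy $\{e,f\}\subseteq J$ or $\{e,f\}\cap J=\emptyset$; the sets $I$ are designed so that in exactly one of these branches the $\gamma_j$-exponents and the count of $M_0$-indices pin $J$ down to $I$ itself, leaving only the other branch. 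First, I take $I$ to be the two indices of $M_0$ together with the pairs indexed by a set $P$ with $\#P=s/2-1$, so $\#I=s$; the branch $\{e,f\}\cap J=\emptyset$ puts $J\subseteq N_0$ and, matching $\gamma_j$-exponents and then counting $M_0$-indices in $J$, forces $J=I$, so $\{e,f\}\subseteq J$, and the $\eta_t$-exponent equation then gives $a+b=0$. Second, I take $I=\{e,f\}$ together with the pairs indexed by a set $P$ with $j_0\in P$ and $\#P=s/2-1$, so $\#I=s$; now $\prod_{i\in I}\alpha_i$ has $\eta_t$-exponent $a+b=0$, the branch $\{e,f\}\subseteq J$ forces $J=I$, so $J\subseteq N_0$, and matching the $\gamma_{j_0}$-exponent gives $y_{j_0}+2\le 2$, i.e.\ $y_{j_0}\le 0$; since $j_0$ is arbitrary, $\sum_j y_j\le 0$. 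Third, I take $I=\{e,f\}$ together with the two indices of $M_0$ and the pairs indexed by a set $Q$ with $\#Q=s/2-2$, so $\#I=s$; again $\{e,f\}\subseteq J$ forces $J=I$, so $J\subseteq N_0$, and the number of $M_0$-indices inside $J$ equals $s-\sum_j y_j-2\#Q=4-\sum_j y_j$, which must lie in $\{0,1,2\}$ because $\#M_0=2$; hence $\sum_j y_j\ge 2$, contradicting the second step. This contradiction proves the claim.

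The step I expect to be the main obstacle is making rigorous that the matching set $J$ is genuinely forced. The $n_0$-tuple is highly redundant — each $\gamma_j$ and each element of $M_0$ occurs twice — so a careless choice of $I$ admits a trivial reshuffle of $J$ within $N_0$ and yields no information; one really has to choose the cardinalities so that in one branch $J$ has no remaining freedom and must equal $I$, while in the other the $\gamma_j$-exponents and the count of $M_0$-indices in $J$ are completely rigid. This is exactly where the parity of $s$, the precise sizes $s/2-1$ and $s/2-2$ of $P$ and $Q$, and the equality $\#M_0=2$ all come into play. The only other points requiring care — justifying the normalization to a trivial common factor in the type~(A) representation, and checking that each $I$ above has exactly $s$ elements — are routine.
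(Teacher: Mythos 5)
Your proof is correct and follows essentially the same strategy as the paper's: assume the $N_0$-subtuple is of type (A), exploit the doubled-pair structure together with the $\eta_{s-1}$-exponent (which forces the two exceptional indices of $N_+\cup N_-$ either both into or both out of the comparison set $J$), and derive a contradiction by comparing exponents for suitably chosen $s$-element subsets via the property $(P_{q,s})$. The differences are only tactical --- you use three test sets (yielding $a+b=0$, then $y_j\le 0$ for every $j$, then $\sum_j y_j\ge 2$ by counting $M_0$-indices), whereas the paper uses two (yielding $l_{s-1}+l'_{s-1}=0$ and $l'_{i_0}>0$ for some $i_0\le s/2$, then a direct sign clash) --- but the underlying mechanism is the same.
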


\begin{proof}
We assume the $(2s-2)$-tuple $(\alpha_{p_1},\dots,\alpha_{p_{n_0}})$ is of the type (A).
Then $(s-1)$ is odd and thus $ s $ is an even integer with $ s\geq 4.$

Since $\{1,\alpha_{u_1},\dots,\alpha_{u_{s-2}}\}\subseteq\{\alpha_{p_1},\dots,\alpha_{p_{n_0}}\},$ when we regard the $\alpha_i $ as elements in $\langle\alpha_1,\dots,\alpha_q \rangle\otimes_{\mathbb{Z}}\mathbb{Q},$ we may assume, after a suitable change of indices, that
\begin{equation}     \label{equ:alphai_when_alphaN0_is_of_type(A)}
(\alpha_1,\dots,\alpha_{2s})=(1,1,\eta_1^{l_1},\eta_1^{l_1},\dots,\eta_{s-2}^{l_{s-2}},\eta_{s-2}^{l_{s-2}},
             \eta_{s-1}^{l_{s-1}},\eta_1^{l'_1}\cdots\eta_{s-1}^{l'_{s-1}}),
\end{equation}
where $ l_1,\dots, l_{s-1}$ are positive integers in \eqref{equ:represent_alphai_by_an_adequate_basis_after_tensor_Q}, and $ l'_1,\dots, l'_{s-1}$ are integers and $ l'_{s-1}<0.$

For convenience' sake, we write
\[
   B_0:=\{1,2\},B_1:=\{3,4\},\dots,B_{s-2}:=\{2s-3,2s-2\},B_{s-1}:=\{2s-1,2s\}.
\]

We note that $ s/2 $ is a positive integer and $ s/2\leq s-2.$ Put
\[
   I:=B_1\cup\cdots\cup B_{s/2}.
\]
Since $\# I=s,$ by assumption, there exists a $ J\subseteq\{1,\dots,2s\}$ with $\# J=s $ and $ J\neq I,$ such that
\begin{equation*}
   \prod_{j\in I} \alpha_j= \prod_{j\in J} \alpha_j.
\end{equation*}
Substituting \eqref{equ:alphai_when_alphaN0_is_of_type(A)} into the above equation, we represent the both sides of the above equation with $\eta_1,\dots,\eta_{s-1}.$
If $ J\cap B_{s-1}=\emptyset,$ by observing the exponents of $\eta_i\, (1\leq i\leq s/2),$ we see $ J\supseteq B_1\cup\cdots\cup B_{s/2}$ and thus $ J=I,$ which is a contradiction. Therefore $ J\cap B_{s-1}\neq\emptyset.$ Then by observing the exponents of $\eta_{s-1},$ we see $ J\supseteq B_{s-1}$ and $ l_{s-1}+l'_{s-1}=0.$ By $\# J=\# I,$ there exists an index $ i_0 $ with $ 1\leq i_0\leq s/2 $ such that $ J\not\supseteq B_{i_0}.$ Then by observing the exponents of $\eta_{i_0},$ it is necessary that $ l'_{i_0}=l_{i_0}$ or $ l'_{i_0}=2l_{i_0}.$ In particular, we get $ l'_{i_0}>0.$

Now we put
\[
   I':=B_0\cup B_1\cup\cdots\cup B_{i_0-1}\cup B_{i_0+1}\cup\cdots\cup B_{s/2}.
\]
By assumption, there exists a $ J'\subseteq\{1,\dots,2s\}$ with $\# J'=s $ and $ J'\neq I',$ such that
\begin{equation}      \label{equ:alphaI=alphaJ_in_proof_alphaN0_is_not_of_type(A)}
   \prod_{j\in I'} \alpha_j= \prod_{j\in J'} \alpha_j.
\end{equation}
Substituting \eqref{equ:alphai_when_alphaN0_is_of_type(A)} into \eqref{equ:alphaI=alphaJ_in_proof_alphaN0_is_not_of_type(A)}, we represent the both sides of \eqref{equ:alphaI=alphaJ_in_proof_alphaN0_is_not_of_type(A)} with $\eta_1,\dots,\eta_{s-1}.$
If $ J'\cap B_{s-1}=\emptyset,$ by observing the exponents of $\eta_i $ with $ i\in\{i_0,s/2+1,\dots,s-2\},$ we see
\[
   J'\cap (B_{i_0}\cup B_{s/2+1}\cup\cdots\cup B_{s-2})=\emptyset
\]
and thus $ J'=I',$ which is a contradiction. Thus $ J'\cap B_{s-1}\neq\emptyset,$ and as before we get $ J'\supseteq B_{s-1}.$
Then the exponent of $\eta_{i_0}$ in the right hand side of \eqref{equ:alphaI=alphaJ_in_proof_alphaN0_is_not_of_type(A)} is strictly greater than zero. However the exponent of $\eta_{i_0}$ in the left hand side of \eqref{equ:alphaI=alphaJ_in_proof_alphaN0_is_not_of_type(A)} is zero. We get a contradiction and thus the conclusion of Claim \ref{claim:alphaN0_is_not_of_type(A)} holds.
\end{proof}

Now we know that the $(2s-2)$-tuple $(\alpha_{p_1},\dots,\alpha_{p_{n_0}})$ is of the type (B).

Put
\[
   \beta_1:=\alpha_{u_1},\,\beta_2:=\alpha_{u_2},\dots, \beta_{s-1}:=\alpha_{u_{s-1}}.
\]
Then $\beta_1,\dots,\beta_{s-1}$ are multiplicatively independent elements in $ \langle\alpha_1,\dots,\alpha_{2s}\rangle,$ and as elements in $\langle\alpha_1,\dots,\alpha_{2s}\rangle\otimes_{\mathbb{Z}}\mathbb{Q},$ they can be represented as follows:
\[
   \beta_1=\eta_1^{l_1},\dots,\beta_{s-2}=\eta_{s-2}^{l_{s-2}},\,\beta_{s-1}=\eta_{s-1}^{l_{s-1}},
\]
where $ l_1,\dots, l_{s-1}$ are positive integers.

Since $\{\beta_1,\dots,\beta_{s-2}\}\subseteq\{\alpha_{p_1},\dots,\alpha_{p_{n_0}}\},$ by Observation \ref{obsv:type(B)_is_symmetry_for_nonunit_elements} and suitably changing the indices of $\eta_1,\dots,\eta_{s-2}$ and of the $\alpha_i $'s, we may assume that
\begin{align*}
   (\alpha_1,\dots,\alpha_{2s})&
\\  =\big(1,1,&\dots,\underset{\underset{\alpha_{s-k}}{\uparrow}}{1},\underset{\underset{\alpha_{s-k+1}}{\uparrow}}{\beta_1},\dots, \underset{\underset{\alpha_{2s-k-2}}{\uparrow}}{\beta_{s-2}}, \underset{\underset{\alpha_{2s-k-1}}{\uparrow}}{(\beta_1\cdots\beta_{a_1})^{-1}},
\\ &\qquad (\beta_{a_1+1}\cdots\beta_{a_2})^{-1},\dots,\underset{\underset{\alpha_{2s-2}}{\uparrow}}{(\beta_{a_{k-1}+1}\cdots\beta_{a_k})^{-1}}, \underset{\underset{\alpha_{2s-1}}{\uparrow}}{\eta_{s-1}^{l_{s-1}}}, \underset{\underset{\alpha_{2s}}{\uparrow}}{\eta_1^{l'_1}\cdots\eta_{s-1}^{l'_{s-1}}}\big),
\end{align*}
where $ 0\leq k\leq s-2 $, $ 1\leq a_1<\dots<a_k\leq s-2 $, and $ l'_1,\dots, l'_{s-1}$ are integers and $ l'_{s-1}<0.$

We shall prove next the following assertions which imply that $(\alpha_1,\dots,\alpha_{2s})$ is of the type (B):
\begin{equation}      \label{equ:l'i_in_proof_alphaj_is_of_type(B)}
 \begin{split}
    & l'_{s-1}=-l_{s-1},
 \\ & l'_i\in\{0,-l_i\} \quad\mbox{for}\quad a_k+1\leq i\leq s-2,
 \\ & l'_i=0 \quad\mbox{for}\quad 1\leq i\leq a_k.
 \end{split}
\end{equation}

Put
\[
   I:=\{1,2,\dots,s-k,\, 2s-k-1,\dots,2s-2\}.
\]
Since $\# I=s,$ by assumption, there exists a $ J\subseteq\{1,\dots,2s\}$ with $\# J=s $ and $ J\neq I,$ such that
\begin{equation*}
   \prod_{j\in I} \alpha_j= \prod_{j\in J} \alpha_j.
\end{equation*}
We represent the both sides of the above equation with $\eta_1,\dots,\eta_{s-1}.$ If $ J\cap\{2s-1,2s\}=\emptyset,$ by observing the exponents of $\eta_i\, (1\leq i\leq s-2),$ we see $ J\cap\{s-k+1,\dots,2s-k-2\}=\emptyset,$ and thus $ J=I,$ which is a contradiction. Therefore $ J\cap\{2s-1,2s\}\neq\emptyset.$ Then by observing the exponents of $\eta_{s-1},$ we see $ J\supseteq\{2s-1,2s\}$ and $ l_{s-1}+l'_{s-1}=0.$ Assume $ l'_i\neq 0 $ for some $ a_k+1\leq i\leq s-2.$ Since the exponent of $\eta_i $ in the left hand side is zero, we see that $ s-k+i\in J $ and $ l_i+l'_i=0.$ Assume that $ l'_i>0 $ for some $ 1\leq i\leq a_k,$ then we see that the exponent of $\eta_i $ in the right hand side is strictly greater than that in the left hand side and this is a contradiction.

In summary, we have proved that $ l'_{s-1}=-l_{s-1},$ and $ l'_i\in\{0,-l_i\}$ for $ a_k+1\leq i\leq s-2,$ and $ l'_i\leq 0 $ for $ 1\leq i\leq a_k.$

We shall show next that, for any $ 1\leq i\leq a_k,$ it is impossible that $ l'_i<0.$ Without loss of generality, we assume that $ l'_1<0.$ Put
\[
   I':=\{1,2,\dots,s-k,\, s-k+1,\, 2s-k,\dots,2s-2\}.
\]
By assumption, there exists a $ J'\subseteq\{1,\dots,2s\}$ with $\# J'=s $ and $ J'\neq I',$ such that
\begin{equation*}
   \prod_{j\in I'} \alpha_j= \prod_{j\in J'} \alpha_j.
\end{equation*}
We represent the both sides of the above equation with $\eta_1,\dots,\eta_{s-1}.$ Assume first that $ J'\cap\{2s-1,2s\}=\emptyset.$ By observing the exponents of $\eta_i $ with $ i\in\{1,a_1+1,a_1+2,\dots,s-2\},$ we see
\[
   J'\cap\{s-k+a_1+1,\, s-k+a_1+2,\dots,2s-k-1\}=\emptyset.
\]
Then since $ 2s-k-1\not\in J',$ by observing the exponents of $\eta_i\, (2\leq i\leq a_1),$ we see $ J'\cap\{s-k+2,\dots,s-k+a_1\}=\emptyset.$ Thus by $\# J'=\# I',$ we get $ J'=I',$ which is a contradiction. Therefore $ J'\cap\{2s-1,2s\}\neq\emptyset,$ and as before, we get $ J'\supseteq\{2s-1,2s\}.$ Since $ l'_1<0,$ we see that the exponent of $\eta_1 $ in the right hand side is strictly less than that in the left hand side. We get a contradiction and thus \eqref{equ:l'i_in_proof_alphaj_is_of_type(B)} is proved.

By \eqref{equ:l'i_in_proof_alphaj_is_of_type(B)}, we conclude that there is a subset $\Lambda $ of $\{a_k+1,\dots,s-1\}$ with $ s-1\in\Lambda $ such that
\[
   \alpha_{2s}=(\prod_{i\in\Lambda} \beta_i)^{-1}.
\]
Thus, suitably changing indices, we have
\begin{align*}
   \alpha_1:\alpha_2:\dots:\alpha_{2s}&=1:1:\dots:1:\beta_1:\dots:\beta_{s-1}:
\\ &(\beta_1\cdots\beta_{a_1})^{-1}:\dots:(\beta_{a_{k-1}+1}\cdots\beta_{a_k})^{-1}:(\beta_{a_k+1}\cdots\beta_{a_{k+1}})^{-1},
\end{align*}
where $ 1\leq k+1\leq s-1 $, $ 1\leq a_1<\dots<a_k<a_{k+1}\leq s-1,$ and $\{\beta_1,\dots,\beta_{s-1}\}$ is a basis of $\langle\alpha_1,\dots,\alpha_{2s}\rangle.$ This shows that $(\alpha_1,\dots,\alpha_{2s})$ is of the type (B) and therefore we complete the proof of Lemma \ref{lem:the_third_combinatorial_lemma}.

\bibliographystyle{plain}
\bibliography{zkRef}

\end{document}